 \newtheorem{theorem}{Theorem}[section]
 \newtheorem{lemma}[theorem]{Lemma}
 \theoremstyle{definition}
 \theoremstyle{remark}
 \newtheorem{remark}[theorem]{Remark}
 \numberwithin{equation}{section}
\def\dis{\displaystyle}
\def\dx{\hbox{dx}}
\def\ds{\hbox{ds}}
\def\dy{\hbox{dy}}
\def\eps{\varepsilon}
\def\O{\omega}
\def\K{\mathcal K}
\begin{document}

%
%
%
%
%
%
%
%
%

\title[]
 {A non-iterative reconstruction method for an inverse problem
modeled by a Stokes-Brinkmann equations}

\author[Mourad]{Mourad Hrizi}

\address{Monastir University, Department of Mathematics, Faculty of Sciences
Avenue de l'Environnement $5000,$ Monastir, Tunisia}
\email{mourad-hrizi@hotmail.fr}

\author{Rakia Malek}
\address{Monastir University, Department of Mathematics, Faculty of Sciences
Avenue de l'Environnement $5000,$ Monastir, Tunisia}
\email{rakia\_malek@hotmail.fr}

\author{Maatoug Hassine}
\address{Monastir University, Department of Mathematics, Faculty of Sciences
Avenue de l'Environnement $5000,$ Monastir, Tunisia}
\email{maatoug.hassine@enit.rnu.tn}
\subjclass{Primary 65M32, 76B75; Secondary 49Q10, 74S30}

\keywords{Inverse problem, Stokes-Brinkmann equations, topological
sensitivity analysis}


\begin{abstract}
This article is concerned with the reconstruction of obstacle $\O$
immersed in a fluid flowing in a bounded domain $\Omega$ in the two
dimensional case. We assume that the fluid motion is governed by the
Stokes-Brinkmann equations. We make an internal measurement and then
have a least-square approach to locate the obstacle. The idea is to
rewrite the reconstruction problem as a topology optimization
problem. The existence and the stability of the optimization problem
are demonstrated. We use here the concept of the topological
gradient in order to determine the obstacle and it's rough location.
The topological gradient is computed using a straightforward way
based on a penalization technique without the truncation method used
in the literature. The unknown obstacle is reconstructed using a
level-set curve of the topological gradient. Finally, we make some
numerical examples exploring the efficiency of the method.
\end{abstract}

\maketitle
\tableofcontents

\section{Introduction}
This paper is concerned with an inverse problem related to the
Stokes-Brinkmann equations. It consists of reconstructing an
obstacle immersed in a porous media $\Omega\subset\mathbb{R}^2$ with
the help of collecting measurements of the velocity of the fluid
motion. Such an inverse problem has several applications, for
instance in modeling of liquids or gas through the ground
\cite{krotkiewski2011importance,popov2009multiscale} and
microfluidics \cite{koster2007numerical}.

The number of publications on inverse problems for Stokes-Brinkmann
equations are relatively small when compared to Stokes equations,
see for instance
\cite{abda2009topological,caubet2012localization,caubet2015detection,beretta2017size,heck2007reconstruction,alves2004determination}
and reference therein. The works which are related to ours are
presented by Lechleiter and Rienm\"{u}ller
\cite{lechleiter2013factorization} and Yan \emph{et al.}
\cite{yan2019shape}. In the first reference, they identified the
shape of a penetrable inclusion from boundary measurements using the
factorization method. In the work of Yan \emph{et al,} they solved
the considered inverse problem and proposed a method relies on the
minimization of a tracking cost functional using the shape gradient
method. They derived the shape gradient for the tracking functional
based on the continuous adjoint method and the function space
parametrization technique.

In our paper, to solve this inverse problem numerically, we propose
an alternative method based on the topological sensitivity analysis.
The general idea of the proposed method consists in rewriting the
inverse problem as a topology optimization problem, where the
obstacle is the unknown variable. The topology optimization problem
consists in minimizing the so-called least squares functional with
the total variation regularization. This cost functional is
minimized with respect to a small topological perturbation of the
obstacle by using the concept of topological sensitivity. The main
advantage of this detection method is that, it provides fast and
accurate results for detection.

The topological sensitivity analysis consists of studying the
variation of a given cost functional with respect to the presence of
a small domain perturbation, such as  the insertion of inclusions,
cavities, cracks or source-terms. Let us briefly discuss the history
of this method. Its main idea was originally introduced by
Schumacher \cite{schumacher1996topologieoptimierung} in the context
of compliance minimization in linear elasticity. In the same context
Sokolowski $\&$ Zochowski \cite{sokolowski1999topological}, who
studied the effect of an extract infinitesimal part of the material
in structural mechanics. Then in \cite{masmoudi2002topological}
Masmoudi worked out a topological sensitivity analysis framework
based on a generalization of the adjoint method and on the use of a
truncation technique. By using this framework the topological
sensitivity is obtained for several equations
\cite{garreau2001topological,masmoudi2005topological,pommier2004topological,samet2003topological}.
For other works on the topological sensitivity concept, we refer to
the book by Novotny $\&$ Sokolowski \cite{novotny2012topological}.

In order to introduce this concept, let us consider a bounded domain
$\Omega\subset\mathbb{R}^2$ and a cost function
$j(\Omega)=\mathcal{J}(\psi_\Omega)$ to be minimized, where
$\psi_\Omega$
 is the solution to a given partial
differential equation defined in $\Omega.$ For $\varepsilon>0,$ let
$\Omega\backslash\overline{\mathcal{S}_{z,\varepsilon}}$ be the
perturbed domain obtained by removing a small topological
perturbation $\mathcal{S}_{z,\varepsilon}=z+\varepsilon\mathcal{S}$
from the reference (unperturbed) domain $\Omega,$ where $z\in\Omega$
and $\mathcal{S}\subset\mathbb{R}^2$ is a given fixed and bounded
domain containing the origin. The topological sensitivity analysis
leads to an asymptotic expansion of the function $j$ of the form
$$
j(\Omega\backslash\overline{\mathcal{S}_{z,\varepsilon}})=j(\Omega)+f(\varepsilon)\delta
j(z)+o(f(\varepsilon)),
$$
where $f(\varepsilon)$ is a positive function depending upon the
size $\varepsilon$ of the topological perturbation such that
$f\rightarrow0,$ when $\varepsilon\rightarrow0.$ The function
$z\mapsto\delta j(z)$ is called the ``topological gradient" or
``topological sensitivity" of $j$ at $z.$ Mathematically, we can
express it as
$$
\delta
j(z):=\lim_{\varepsilon\rightarrow0}\frac{j(\Omega\backslash\overline{\mathcal{S}_{z,\varepsilon}})-j(\Omega)}{f(\varepsilon)}.
$$
Hence, if we want to minimize the cost function $j$, the best
location to insert a small perturbation in $\Omega$ is where $\delta
j$ is most negative. In fact if $\delta j(z)<0$, we have
$j(\Omega\backslash\overline{\mathcal{S}_{z,\varepsilon}})\leq
j(\Omega)$ for small $\varepsilon.$ Topological sensitivity analysis
for the Stokes equations has been studied in the past by Guillaume
and Idris \cite{guillaume2004topological}, they used the Masmoudi's
approach which is truncation technique. For the quasi-Stokes
\cite{hassine2004topological} problems the topological sensitivity
has been treated again with the truncation technique. For
Navier-Stokes equations, we refer the reader to the work
\cite{amstutz2005topological}, where the topological sensitivity is
computed with the help of an alternative to the truncation based on
the comparison between the perturbed and the initial problems both
formulated in the perforated domain.

In this paper, we have derived a topological asymptotic expansion of
the cost functional by using a penalization technique. This approach
allowed us to perform the topological asymptotic without using the
truncation method presented in the previous works. From the obtained
theoretical results, we propose a fast and accurate detection
algorithm for recovering the shape and the location of an obstacle.
The efficiency and accuracy of the proposed algorithm are
illustrated by some numerical examples. Particularly, we test the
influence of some parameters in our procedure such as the shape,
location and the size of the obstacles.

The rest of this paper is organized as follows. In Section
\ref{section1}, we introduce the notation for function spaces and we
present the forward and inverse problem. Section \ref{section2}
proves the unique existence and the stability of the considered
optimization problem. While in Section \ref{section4}, we derive the
asymptotic expansion of the proposed cost functional. In Section
\ref{numerique}, some numerical experiments are presented in order
to show the effectiveness of the proposed method. Finally, the paper
ends with some concluding remarks in Section \ref{conclusion}.

\section{The problem setting}\label{section1}

\subsection{Notation}
Let us introduce some notation which will be useful in what follows.
For an open and bounded domain $\Omega\subset\mathbb{R}^2,$ we
denote by $L^q(\Omega):=[L^q(\Omega)]^2$ and
$H^s(\Omega):=[H^s(\Omega)]^2$ the usual Lebesgue and Sobolev
spaces. We define an inner product for matrices by
$M:N=\sum_{i,j=1}^2M_{ij}N_{i,j}$ for $M,N\in\mathbb{R}^{2\times2};$
the associated norm is $|M|=\sqrt{M:M}.$ The corresponding inner
product on $L^2(\Omega)^{2\times2}$ is
$$
\langle M,N\rangle_{L^2(\Omega)^{2\times2}}=\int_\Omega M(x):N(x)\
\dx\ \ \hbox{for}\ M,N\in L^2(\Omega)^{2\times2}.
$$
In a Banach space $\mathcal{Y}$, we denote the weak convergence of a
sequence $\{\zeta_n\}_n$ to $\zeta$ by
$$
\zeta_n\rightharpoonup\zeta \ \ \hbox{in}\ \ \mathcal{Y}\ \
\hbox{as}\ n\rightarrow\infty.
$$
Finally, for the sake of completeness we briefly introduce the space
of functions with bounded total variation. Standard properties of
bounded variation functions can be found in
\cite{ambrosio2000functions,attouch2006variational}. A function $u$
belonging to $L^1(\Omega)$ is said to be of bounded total variation
if $$ TV(u)=\int_\Omega D u(x)\dx:=\sup\Big\{\int_\Omega u\
\hbox{div}\ \varphi\ \dx\Big|\
\varphi\in\mathcal{C}^1_c(\Omega,\mathbb{R}^d),\
\|\varphi\|_{L^\infty(\Omega)}\leq1 \Big\}<\infty.
$$
Here $\mathcal{C}^1_c(\Omega,\mathbb{R}^d)$ is the space of
continuously differentiable functions with compact support in
$\Omega$ and $\|.\|_{L^\infty(\Omega)}$ is the essential supremum
norm. The space of all functions in $L^1(\Omega)$ with bounded total
variation is denoted by
$$
BV(\Omega)=\Big\{u\in L^1(\Omega)\Big|\ \int_\Omega D
u(x)\dx<\infty\Big\}.
$$

\subsection{The studied problem}
Let $\Omega$ be a bounded Lipschitz open set of $\mathbb{R}^2$
containing a Newtonian and incompressible fluid with coefficient of
kinematic viscosity $\nu>0$ and has an inverse permeability
$\alpha>0.$ Let $\omega$ be a bounded Lipschitz domain included in
$\Omega.$ The Brinkmann system describing the motion of the fluid in
$\Omega$ in the presence of the obstacle $\O$ is given by (see, for
example \cite{borrvall2003topology})
\begin{equation}\label{11}
\left\{
\begin{array}{c}
\begin{array}{r l l l}
-\nu\Delta \psi+\alpha \psi+\nabla p & = 0& \mbox{in } & { \Omega\backslash \overline{{\O}},} \\
 \mbox{div } \psi & =  0 & \mbox{in } &{ \Omega\backslash \overline{{\O}},}\\
  \psi& =0 &  \mbox {on }&{  \Gamma,}\\
  \sigma(\psi,p)\textbf{n}& =g &  \mbox {on }&{  \Sigma,}\\
\psi & =  0 & \mbox {on }&{\partial\O},
\end{array}
\end{array}
 \right.
\end{equation}
where $g\in H^{-1/2}(\Sigma)$ is a given function, $\psi$ represents
the velocity of the fluid and $p$ the pressure and $\sigma$
represents the stress tensor defined by
\begin{align*}
\sigma(\psi,p)=-p\mathrm{I}+2\nu e(\psi),
\end{align*}
with $\mathrm{I}$ is the $2\times2$ identity matrix and $e(\psi)$ is
the linear strain tensor defined as
\begin{align*}
e(\psi)=\frac{1}{2}\Big(\nabla \psi+\ ^t\nabla \psi\Big).
\end{align*}
Here $\textbf{n}$ denotes the outward normal to the boundary
$\partial\Omega=\Sigma\cup\Gamma$ where $\Sigma$ and $\Gamma$ have
both a nonnegative Lebesgue measure and
$\Sigma\cap\Gamma=\emptyset.$

The aim of this work is to reconstruct the obstacle $\O.$ In order
to reconstruct the location and the shape of the obstacle, we make a
measurement $\psi^d\in H^1(\Omega)$. Thus we consider the following
geometric inverse problem:
\begin{align}\label{inverse}
Determine\ the\ obstacle\ \omega\subset\Omega\ from\ the\
measurement\ \psi^d.
\end{align}
The data $\psi^d$ for which this inverse problem has a solution
$\psi$ are said to be \emph{compatible}.

To solve numerically this geometric inverse problem, we introduce
the following optimization problem:
\begin{equation}\label{minimizationpb}
\left\{
  \begin{array}{ll}
    \operatorname*{Minimize}\
\K(\O,\psi):=\mathrm{J}(\O)=\dis\int_{\Omega\backslash\overline{\O}}\Big|\psi-\psi^d\Big|^2\dx+\rho\mathcal{P}(\O,\Omega), \\
    \hbox{subjet to}\ \ \O\in\mathcal{U}_{ad}\ \ \hbox{and} \ \ \psi\ \ \hbox{is the solution
to}\ \ (\ref{11})
  \end{array}
\right.
\end{equation}
where $\rho$ is a regularization parameter and
\begin{equation*}
\mathcal{U}_{ad}=\Big\{\O\subset\Omega: \O \ \hbox{is a subdomain
in}\ \Omega\ \hbox{such that}\ \mathcal{P}(\O,\Omega)<+\infty\Big\},
\end{equation*}
with $\mathcal{P}(\O,\Omega)$ denotes the relative perimeter of $\O$
in $\Omega$ is defined by
\begin{align*}
\mathcal{P}(\O,\Omega):=TV(\chi(\O))=\sup\Big\{\int_\O\hbox{div}\
\varphi\ \dx:\ \varphi\in\mathcal{C}^1_c(\Omega,\mathbb{R}^2),\
\|\varphi\|_{L^\infty(\Omega)}\leq1\Big\}.
\end{align*}
Here $\chi(\O)$ is the characteristic function of $\O.$

\begin{remark}If $\mathcal{P}(\O,\Omega)<+\infty,$ we say that $\O$ has
finite perimeter in $\Omega.$ In this case the relative perimeter
$\mathcal{P}(\O,\Omega)$ of $\O$ coincides with the total variation
of the distributional gradient of the characteristic function of
$\O:$
$$\mathcal{P}(\O,\Omega)=|D\chi(\O)|(\Omega).$$
\end{remark}

\section{Preliminary results}\label{section2}
In this section, we shall establish the unique existence of the
solution as well as the stability of the minimization problem
(\ref{minimizationpb}).

Before establishing uniqueness and stability of the minimization
problem (\ref{minimizationpb}), we need the following lemma given in
\cite{girault2012finite,lechleiter2013factorization}.

\begin{lemma}\label{lemma1}
The boundary value problem (\ref{11}) admits a unique solution
$(\psi(\O),p(\O))$ and there exists a constant $c>0$ such that
$$
\Big\|\psi(\O)\Big\|_{H^1(\Omega)}\leq
c\Big\|g\Big\|_{H^{-1/2}(\Sigma)}.
$$
\end{lemma}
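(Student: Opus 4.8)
The plan is to recast the boundary value problem \eqref{11} in a variational (weak) form on the space $V:=\{v\in H^1(\Omega\backslash\overline{\O}) : v=0 \text{ on } \Gamma\cup\partial\O\}$ with the divergence constraint handled either by working on the subspace of divergence-free fields or by introducing the pressure as a Lagrange multiplier in a saddle-point formulation. First I would multiply the momentum equation by a test function $v\in V$, integrate by parts, and use the Neumann-type condition $\sigma(\psi,p)\mathbf n=g$ on $\Sigma$ to absorb the boundary term, arriving at the bilinear form
\begin{align*}
a(\psi,v)=\int_{\Omega\backslash\overline{\O}}\bigl(2\nu\, e(\psi):e(v)+\alpha\,\psi\cdot v\bigr)\,\dx,
\end{align*}
together with the bilinear form $b(v,q)=-\int_{\Omega\backslash\overline{\O}} q\,\operatorname{div} v\,\dx$ and the linear functional $\ell(v)=\langle g,v\rangle_{\Sigma}$.

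The core of the argument is then a standard application of the Babu\v{s}ka--Brezzi (inf-sup) theory for saddle-point problems. The two things to check are: (i) coercivity of $a$ on the kernel of $b$ (in fact on all of $V$), which follows from Korn's inequality on $\Omega\backslash\overline{\O}$ — giving $a(v,v)\ge 2\nu\|e(v)\|_{L^2}^2 + \alpha\|v\|_{L^2}^2 \ge C\|v\|_{H^1}^2$, where the Dirichlet condition on $\Gamma$ (of positive measure) is what lets Korn's inequality close to a full $H^1$-norm bound; and (ii) the inf-sup condition for $b$ on $V\times L^2_0$, which is the classical result that $\operatorname{div}:V\to L^2$ is surjective onto the appropriate pressure space with a bounded right inverse. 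Here I would cite the references already invoked, namely \cite{girault2012finite,lechleiter2013factorization}, for both Korn's inequality on Lipschitz domains and the inf-sup property, since the geometry $\Omega\backslash\overline{\O}$ is Lipschitz by hypothesis. Continuity of $a$, $b$, and $\ell$ is immediate from Cauchy--Schwarz and the trace theorem $H^{1/2}(\Sigma)\hookrightarrow$ dual of $H^{-1/2}(\Sigma)$, the latter giving $|\ell(v)|\le \|g\|_{H^{-1/2}(\Sigma)}\|v\|_{H^{1/2}(\Sigma)}\le c\|g\|_{H^{-1/2}(\Sigma)}\|v\|_{H^1}$.

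With coercivity, continuity, and the inf-sup condition in hand, the Brezzi theorem delivers existence and uniqueness of $(\psi,p)\in V\times L^2_0(\Omega\backslash\overline{\O})$ together with the a priori bound $\|\psi\|_{H^1}+\|p\|_{L^2}\le C\bigl(\|\ell\|_{V'}+\cdots\bigr)$; specialising to our data and discarding the pressure estimate gives exactly $\|\psi(\O)\|_{H^1(\Omega)}\le c\|g\|_{H^{-1/2}(\Sigma)}$ with $c=c(\nu,\alpha,\Omega,\O)$. I expect the only mildly delicate point to be the bookkeeping around the mixed boundary conditions — making sure the test space correctly encodes the homogeneous Dirichlet data on $\Gamma\cup\partial\O$ while leaving $\Sigma$ free, and checking that the natural boundary term produced by integration by parts is precisely $\langle g,v\rangle_\Sigma$ — but this is routine and, as noted, the substantive functional-analytic input (Korn + inf-sup on a Lipschitz domain) is quoted from \cite{girault2012finite,lechleiter2013factorization}.
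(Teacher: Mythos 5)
Your proposal is sound, but note that the paper itself offers no proof of this lemma: it is stated as a known result quoted directly from \cite{girault2012finite,lechleiter2013factorization}. What you have written is, in essence, the standard argument those references contain — weak formulation on the velocity space with homogeneous Dirichlet data on $\Gamma\cup\partial\O$, the pressure as a Lagrange multiplier, coercivity of the symmetric-gradient form via Korn, the inf-sup condition for the divergence, and the Babu\v{s}ka--Brezzi theorem, with the a priori bound coming from the duality estimate $|\ell(v)|\leq \|g\|_{H^{-1/2}(\Sigma)}\|v\|_{H^{1/2}(\Sigma)}\leq c\,\|g\|_{H^{-1/2}(\Sigma)}\|v\|_{H^1}$. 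So you are supplying the proof the authors delegate to the literature, and your route is the right one.

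Two small points deserve correction or comment. First, because the traction condition $\sigma(\psi,p)\mathbf{n}=g$ is imposed on a portion $\Sigma$ of positive measure, the pressure is already uniquely determined in $L^2(\Omega\backslash\overline{\O})$; you should not restrict to the zero-mean space $L^2_0$, which is appropriate only for pure Dirichlet conditions (and would in general be incompatible with the prescribed traction). The inf-sup condition is then needed, and holds, on $V\times L^2(\Omega\backslash\overline{\O})$. Second, coercivity is even easier than you suggest: since $\alpha>0$, the zero-order term controls $\|v\|_{L^2}$ outright, and Korn's second inequality then yields the full $H^1$ bound without invoking the Dirichlet part of the boundary (though your argument via $\Gamma$ also works). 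Finally, be aware of a consistency issue in the paper itself: your bilinear form $\int 2\nu\,e(\psi):e(v)+\alpha\,\psi\cdot v$ is the one matching the stated boundary condition $\sigma(\psi,p)\mathbf{n}=g$, whereas the weak form the authors later use (in the proof of Theorem \ref{theorem2.2}) is $\int \nu\nabla\psi:\nabla v+\alpha\,\psi\cdot v$, whose natural boundary condition is the pseudo-stress $\nu\partial_{\mathbf{n}}\psi-p\mathbf{n}=g$; either choice gives well-posedness and the stated estimate (after extending $\psi$ by zero into $\O$, which is legitimate since $\psi=0$ on $\partial\O$), but they correspond to slightly different boundary operators.
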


\begin{remark}
In the above inequality, the solution $\psi(\O)$ is extended by zero
inside the domain $\O,$ still denoted by $\psi(\O)$.
\end{remark}

The penalization of the cost function ($L^2$-norm) by the relative
perimeter is relevant for the existence and uniqueness of the
optimal solution of (\ref{minimizationpb}), which will be proved in
the following theorem.
\begin{theorem}\label{theorem2.2}
For any $\psi^{d}\in H^1(\Omega),$ there exists a unique minimizer
$\O^*\in\mathcal{U}_{ad}$ to the minimization problem
(\ref{minimizationpb}).
\end{theorem}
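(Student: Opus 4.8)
The plan is to use the direct method of the calculus of variations. First I would show that the infimum of $\K$ over $\mathcal{U}_{ad}$ is finite and nonnegative (it is bounded below by $0$), and fix a minimizing sequence $\{\O_n\}\subset\mathcal{U}_{ad}$. Along this sequence the penalized cost is bounded, so in particular $\mathcal{P}(\O_n,\Omega)=TV(\chi(\O_n))\leq C$ uniformly. Since $\Omega$ is bounded, the characteristic functions $\chi(\O_n)$ are also bounded in $L^1(\Omega)$, hence bounded in $BV(\Omega)$. By the compact embedding $BV(\Omega)\hookrightarrow\hookrightarrow L^1(\Omega)$ (valid for Lipschitz $\Omega$), up to a subsequence $\chi(\O_n)\to\chi^*$ strongly in $L^1(\Omega)$ and a.e.; since each $\chi(\O_n)$ takes only the values $0$ and $1$, the a.e. limit $\chi^*$ is again the characteristic function of some measurable set $\O^*\subset\Omega$, and by lower semicontinuity of the total variation with respect to $L^1$-convergence, $\mathcal{P}(\O^*,\Omega)\leq\liminf_n\mathcal{P}(\O_n,\Omega)<+\infty$, so $\O^*\in\mathcal{U}_{ad}$.

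Next I would pass to the limit in the fidelity term $\int_{\Omega\backslash\overline{\O_n}}|\psi(\O_n)-\psi^d|^2\,\dx$. Using the zero-extension convention from the remark after Lemma~\ref{lemma1}, this equals $\int_\Omega|\psi(\O_n)-\chi(\Omega\backslash\overline{\O_n})\,\psi^d|^2\,\dx$ up to handling the region where $\psi(\O_n)$ vanishes; more cleanly, I would write the fidelity term using the extended velocity and the characteristic function of the complement of the obstacle, both of which converge appropriately. By Lemma~\ref{lemma1} the sequence $\psi(\O_n)$ is bounded in $H^1(\Omega)$ (the bound is uniform since it depends only on $\|g\|_{H^{-1/2}(\Sigma)}$), so up to a further subsequence $\psi(\O_n)\rightharpoonup\psi^*$ in $H^1(\Omega)$ and, by Rellich, strongly in $L^2(\Omega)$. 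Combining the strong $L^2$ convergence of $\psi(\O_n)$ with the strong $L^1$ (hence, after extracting, a.e.) convergence of $\chi(\O_n)$ and dominated convergence, the fidelity term converges to $\int_{\Omega\backslash\overline{\O^*}}|\psi^*-\psi^d|^2\,\dx$. One must then identify $\psi^*$ as $\psi(\O^*)$, the solution of (\ref{11}) in the limiting domain: I would pass to the limit in the weak formulation of the Brinkmann system, using that the no-slip constraint $\psi(\O_n)=0$ on $\O_n$ transfers to the limit because $\psi(\O_n)\chi(\O_n)\to\psi^*\chi^*$ and the left side is $0$. Together with weak lower semicontinuity of the $L^2$-norm (which suffices even if strong convergence of the fidelity term needs care), we get $\K(\O^*,\psi(\O^*))\leq\liminf_n\K(\O_n,\psi(\O_n))=\inf\K$, so $\O^*$ is a minimizer.

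For uniqueness I would argue by the strict convexity introduced through the perimeter penalization together with uniqueness of the state: if $\O^*_1$ and $\O^*_2$ were two minimizers, then by Lemma~\ref{lemma1} the states $\psi(\O^*_i)$ are uniquely determined, and the fidelity term forces $\psi(\O^*_1)=\psi(\O^*_2)=\psi^d$ on the common part of the domains if the data are compatible; the relative perimeter term, being strictly convex along the segment $t\mapsto\chi_t$ interpolating the two characteristic functions (or via a standard argument that a strict convex combination strictly decreases $TV$ unless the sets coincide), then yields $\chi(\O^*_1)=\chi(\O^*_2)$ a.e., hence $\O^*_1=\O^*_2$ up to a null set. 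The main obstacle I anticipate is the identification step: ensuring that the limit $\psi^*$ of the extended velocities is genuinely the Brinkmann solution $\psi(\O^*)$ in the limiting geometry, since the domains $\Omega\backslash\overline{\O_n}$ vary with $n$ and the no-slip boundary condition on $\partial\O_n$ must survive the limit. Handling this requires care with the convergence of the divergence-free constraint and the boundary conditions under mere $L^1$-convergence of the obstacle characteristic functions, and it is here that the regularity afforded by the bounded-perimeter penalization is essential. A secondary delicate point is the strict convexity argument for uniqueness, which should be stated precisely since $\mathcal{U}_{ad}$ is not itself convex as a family of sets — one works at the level of characteristic functions in $BV(\Omega)$.
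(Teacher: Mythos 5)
Your existence argument follows essentially the same route as the paper: the direct method, a uniform bound on $\mathcal{P}(\O_n,\Omega)$ along the minimizing sequence, compactness of $BV(\Omega)$ in $L^1(\Omega)$ to extract $\chi(\O_n)\to\chi(\O^*)$, boundedness of the zero-extended states in $H^1(\Omega)$ via Lemma~\ref{lemma1}, identification of the weak limit with $\psi(\O^*)$ by passing to the limit in the weak formulation, and lower semicontinuity of the $L^2$ misfit and of the perimeter. Your use of Rellich to get strong $L^2$ convergence of the velocities is a mild strengthening of the lower-semicontinuity step, and the difficulty you flag (transferring the divergence-free constraint and the no-slip condition on the varying obstacles to the limit geometry) is exactly the delicate point in the paper's own passage to the limit in the identity involving $\chi(\Omega\backslash\overline{\O_n})$; neither you nor the paper fully resolves it, but your sketch is at the same level of rigor.

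The genuine gap is the uniqueness step. The total variation is convex but \emph{not} strictly convex (it is positively one-homogeneous), so no strict-convexity argument can force $\chi(\O^*_1)=\chi(\O^*_2)$; moreover a strict convex combination $t\chi(\O^*_1)+(1-t)\chi(\O^*_2)$ is not a characteristic function, hence not an admissible competitor in $\mathcal{U}_{ad}$, so the interpolation you invoke does not even produce a comparison set, and equality in the convexity inequality for $TV$ can occur for genuinely different sets. The claim that the fidelity term forces $\psi(\O^*_1)=\psi(\O^*_2)=\psi^d$ on the common part is also unjustified: with $\rho>0$ the minimizer of the regularized functional need not attain zero misfit, and in any case the map $\O\mapsto\psi(\O)$ is not affine in $\chi(\O)$, so the overall functional is not convex in any sense that would yield uniqueness. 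For comparison, the paper's own proof in fact only establishes existence (it asserts that the limit set is ``the unique minimizer'' without giving an argument for uniqueness), so if uniqueness is to be claimed at all, a different idea is needed; your proposal does not supply one, and as written this step would fail.
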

\begin{proof}
Since $\mathrm{J}(\O)$ is non-negative, we know that
$\displaystyle\inf_{\O\in\mathcal{U}_{ad}}\mathrm{J}(\O)$ is finite.
Therefore, there exists a minimizing sequence
$\{\O_n\}_n\subset\mathcal{U}_{ad}$ such that
$$
\lim_{n\rightarrow\infty}\mathrm{J}(\O_n)=\inf_{\O\in\mathcal{U}_{ad}}\mathrm{J}(\O).
$$
From the definition of the admissible set $\mathcal{U}_{ad}$, we
have $\mathcal{P}(\O_n,\Omega)<+\infty$  then $\{\chi(\O_n)\}_n$ is
bounded in $BV(\Omega).$ Thus, $\{\chi(\O_n)\}_n$ is relatively
compact in $L^1(\Omega).$ Therefore, there exists
$\O^*\in\mathcal{U}_{ad}$ and a subsequence of $\{\chi(\O_n)\}_n,$
still denoted by $\{\chi(\O_n)\}_n,$ such that
$$
\chi(\O_n)\rightarrow\chi(\O^*) \ \ \hbox{in}\ L^1(\Omega)\
\hbox{as}\ n\rightarrow\infty.
$$
Now we prove that $\O^*$ is indeed the unique minimizer to the
problem (\ref{minimizationpb}).

Since each $\O_n$ corresponds with a solution $\psi(\O_n)$ to
(\ref{11}) with $\O=\O_n,$ it follows immediately from Lemma
\ref{lemma1} that the sequence $\{\psi(\O_n)\}_n$ is also bounded in
$H^1(\Omega)$. This indicates the existence of some $\psi^*\in
H^1(\Omega)$ and a subsequence of $\{\psi(\O_n)\}_n,$ again still
denoted by $\{\psi(\O_n)\}_n,$ such that
\begin{align}\label{convergancefaible}
 \psi(\O_n)\rightharpoonup\psi^* \ \hbox{in}\ H^1(\Omega)\ \hbox{as}\ n\rightarrow\infty.
\end{align}
We claim $\psi^*=\psi(\O^*).$ Actually, using Green's formula on
(\ref{11}), we have
$$
\int_{\Omega\backslash\overline{\O}}\Big(\nu\nabla\psi:\nabla\vartheta+\alpha\psi.\vartheta\Big)\
\dx=\int_\Sigma g.\vartheta\ \ds, \ \ \hbox{for all} \
\vartheta\in\mathcal{V}(\O),
$$
where the functional space $\mathcal{V}(\O)$ is defined by
$$\mathcal{V}(\O)=\Big\{v\in H^1(\Omega\backslash\overline{\O});\
\hbox{div} v=0 \ \hbox{in}\ \Omega\ \hbox{and}\ v=0\ \hbox{on}\
\Gamma\cup\partial\O\Big\}.$$ By taking $\O=\O_n$ and
$\psi=\psi(\O_n)$ we have
\begin{align}\label{vrr}
\int_{\Omega}\chi(\Omega\backslash\overline{\O_n})\Big(\nu\nabla\psi(\O_n):\nabla\vartheta+\alpha\psi(\O_n).\vartheta\Big)\
\dx=\int_{\Sigma}g.\vartheta\ \ds, \ \ \hbox{for all} \
\vartheta\in\mathcal{V}(\O_n).
\end{align}
Since (\ref{convergancefaible}) implies
$$
 \nabla\psi(\O_n)\rightharpoonup\nabla\psi^* \ \hbox{in}\ H^1(\Omega)\ \hbox{as}\
n\rightarrow\infty,
$$
we pass $n\rightarrow\infty$ in (\ref{vrr}) to obtain
\begin{align*}
\int_{\Omega}\chi(\Omega\backslash\overline{\O^*})\Big(\nu\nabla\psi^*:\nabla\vartheta+\alpha\psi^*.\vartheta\Big)\
\dx=\int_{\Sigma}g.\vartheta\ \ds, \ \ \hbox{for all} \
\vartheta\in\mathcal{V}(\O^*).
\end{align*}
Then it follows from the definition of weak solution and Lemma
\ref{lemma1} that $\psi^*$ coincides with the unique solution to
(\ref{11}) with $\O=\O^*,$ that is, $\psi^*=\psi(\O^*).$

Finally, using $\chi(\O_n)\rightarrow\chi(\O^*) \ \ \hbox{in}\
L^1(\Omega)$ and (\ref{convergancefaible}), we employ the lower
semi-continuity of the $L^2$-norm and the lower semi-continuity of
the perimeter to conclude
\begin{align*}
\mathrm{J}(\O^*)&=\int_{\Omega\backslash\overline{\O^*}}\Big|\psi^*-\psi^d\Big|^2\dx+\rho\mathcal{P}(\O^*,\Omega)\\
&\leq\lim_{n\rightarrow\infty}\inf\int_{\Omega\backslash\overline{\O_n}}\Big|\psi(\O_n)-\psi^d\Big|^2\dx+\rho\lim_{n\rightarrow\infty}\inf\mathcal{P}(\O_n,\Omega)\\
&\leq\lim_{n\rightarrow\infty}\inf\mathrm{J}(\O_n)=\inf_{\O\in\mathcal{U}_{ad}}\mathrm{J}(\O).
\end{align*}
\end{proof}

Next, we justify the stability of (\ref{minimizationpb}), namely,
the minimization problem (\ref{minimizationpb}) that is indeed a
stabilization for problem (\ref{11}) with respect to the observation
data $\psi^d.$

\begin{theorem}\label{theorem2.3}
Let $\{\psi^d_n\}_n\subset H^1(\Omega)$ be a sequence such that
\begin{align}\label{es4}
\psi^d_n\rightharpoonup \psi^d\ \ \hbox{in} \ H^1(\Omega)\
\hbox{as}\ n\rightarrow\infty,
\end{align}
and $\{\O_n\}_n$ be a sequence of minimizer of problems
$$
\operatorname*{Minimize}_{\O\in\mathcal{U}_{ad}}\ \mathrm{J}_n(\O)\
\ \hbox{with}\ \
\mathrm{J}_n(\O):=\int_{\Omega\backslash\overline{\O}}\Big|\psi-\psi^d_n\Big|^2\dx+\rho\mathcal{P}(\O,\Omega),\
\ n=1,2,....
$$
Then $\{\O_n\}_n$ converges weakly in $H^1(\Omega)$ to the minimizer
of (\ref{minimizationpb}).
\end{theorem}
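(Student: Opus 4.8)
The plan is to use the direct method, closely following the proof of Theorem \ref{theorem2.2}, the only new feature being that two weak limits must be kept under control simultaneously: the one coming from the data $\psi^d_n$ and the one coming from the states $\psi(\O_n)$. I read the conclusion ``$\{\O_n\}_n$ converges weakly in $H^1(\Omega)$ to the minimizer of (\ref{minimizationpb})'' as: $\chi(\O_n)\to\chi(\O^*)$ in $L^1(\Omega)$ and $\psi(\O_n)\rightharpoonup\psi(\O^*)$ in $H^1(\Omega)$, where $\O^*$ denotes the unique minimizer of $\mathrm{J}$ furnished by Theorem \ref{theorem2.2}.

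First I would record some a priori bounds. Since $\psi^d_n\rightharpoonup\psi^d$ in $H^1(\Omega)$, the sequence $\{\psi^d_n\}_n$ is bounded in $H^1(\Omega)$, and because $\Omega$ is bounded and Lipschitz the compact embedding $H^1(\Omega)\hookrightarrow L^2(\Omega)$ yields $\psi^d_n\to\psi^d$ strongly in $L^2(\Omega)$. Testing the minimality of $\O_n$ against any fixed $\O_0\in\mathcal{U}_{ad}$ gives $\mathrm{J}_n(\O_n)\leq\mathrm{J}_n(\O_0)$, and the right-hand side is bounded uniformly in $n$ because $\psi(\O_0)$ is fixed and $\{\psi^d_n\}_n$ is $L^2$-bounded; hence $\sup_n\mathrm{J}_n(\O_n)<\infty$. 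In particular $\sup_n\mathcal{P}(\O_n,\Omega)<\infty$, so $\{\chi(\O_n)\}_n$ is bounded in $BV(\Omega)$ and therefore relatively compact in $L^1(\Omega)$, while Lemma \ref{lemma1} gives a bound on $\{\psi(\O_n)\}_n$ in $H^1(\Omega)$ independent of $n$. Passing to a subsequence, $\chi(\O_n)\to\chi(\O^*)$ in $L^1(\Omega)$ and a.e.\ for some $\O^*\in\mathcal{U}_{ad}$, $\psi(\O_n)\rightharpoonup\psi^*$ in $H^1(\Omega)$, and then $\psi(\O_n)\to\psi^*$ strongly in $L^2(\Omega)$.

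Next I would identify $\psi^*=\psi(\O^*)$ by passing to the limit in the weak formulation (\ref{vrr}), just as in Theorem \ref{theorem2.2}: since a bounded, a.e.\ convergent multiplier against an $L^2$-weakly convergent sequence converges weakly in $L^2$, one has $\chi(\Omega\backslash\overline{\O_n})\nabla\psi(\O_n)\rightharpoonup\chi(\Omega\backslash\overline{\O^*})\nabla\psi^*$ in $L^2(\Omega)$, and similarly for the zeroth-order term. Then I would show $\O^*$ minimizes $\mathrm{J}$. From $\psi(\O_n)\to\psi^*$ and $\psi^d_n\to\psi^d$ in $L^2(\Omega)$ one gets $|\psi(\O_n)-\psi^d_n|^2\to|\psi^*-\psi^d|^2$ in $L^1(\Omega)$, and multiplying by the bounded, a.e.\ convergent weight $\chi(\Omega\backslash\overline{\O_n})$ gives $\int_{\Omega\backslash\overline{\O_n}}|\psi(\O_n)-\psi^d_n|^2\dx\to\int_{\Omega\backslash\overline{\O^*}}|\psi^*-\psi^d|^2\dx$. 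Combining this with the lower semicontinuity of the perimeter, $\mathcal{P}(\O^*,\Omega)\leq\liminf_n\mathcal{P}(\O_n,\Omega)$, and with $\psi^*=\psi(\O^*)$, I obtain, for every fixed $\O\in\mathcal{U}_{ad}$,
$$
\mathrm{J}(\O^*)\leq\liminf_{n\to\infty}\mathrm{J}_n(\O_n)\leq\liminf_{n\to\infty}\mathrm{J}_n(\O)=\mathrm{J}(\O),
$$
where the last equality uses $\psi^d_n\to\psi^d$ in $L^2(\Omega)$ and that $\O$, hence $\psi(\O)$, is fixed. Thus $\O^*$ minimizes $\mathrm{J}$, so by uniqueness (Theorem \ref{theorem2.2}) $\O^*$ is the minimizer of (\ref{minimizationpb}). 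Since this limit does not depend on the extracted subsequence, a routine subsequence argument promotes the convergence to the full sequence $\{\O_n\}_n$, which is the assertion.

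The main obstacle is the passage to the limit in the state equation when both the domain $\Omega\backslash\overline{\O_n}$ and the divergence-free test space $\mathcal{V}(\O_n)$ depend on $n$; this is handled exactly as in Theorem \ref{theorem2.2}, and I would simply refer to that argument. The only genuinely new point is the simultaneous treatment of the two weak limits in the cost functional, which I would resolve by upgrading the weak $H^1$ convergences to strong $L^2$ convergences via Rellich--Kondrachov, after which the fidelity term in fact converges rather than merely being lower semicontinuous.
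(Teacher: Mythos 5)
Your proof is correct and follows the same direct-method route as the paper's: use minimality of $\O_n$ to get a uniform perimeter bound, extract $\chi(\O_n)\to\chi(\O^*)$ in $L^1(\Omega)$ and $\psi(\O_n)\rightharpoonup\psi(\O^*)$ in $H^1(\Omega)$, identify the limit state by passing to the limit in the weak formulation exactly as in Theorem \ref{theorem2.2}, and then compare $\mathrm{J}(\O^*)$ with $\mathrm{J}(\O)$ for arbitrary admissible $\O$ via the chain $\mathrm{J}(\O^*)\leq\liminf_n\mathrm{J}_n(\O_n)\leq\lim_n\mathrm{J}_n(\O)=\mathrm{J}(\O)$. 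The only differences are refinements rather than a different route: you upgrade the weak $H^1$ convergences to strong $L^2$ convergence by Rellich--Kondrachov so that the fidelity term genuinely converges (where the paper appeals to weak lower semicontinuity of the $L^2$ norm, and uses the strong $L^2$ convergence of $\psi^d_n$ only implicitly in the final equality), and you add the sub-subsequence argument promoting the convergence to the full sequence, which the paper omits.
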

\begin{proof}
The unique existence of each $\O_n$ is guaranteed by Theorem
\ref{theorem2.2}. By definition, we get:
$$
\mathrm{J}_n(\O_n)\leq\mathrm{J}_n(\O),\ \ \forall
\O\in\mathcal{U}_{ad},
$$
which implies the boundedness of $\chi(\O_n)$ in $BV(\Omega).$ Thus
$\chi(\O_n)$ are relatively compact in $\mathrm{L}^1(\Omega).$
Hence, there exists $\O^*\in\mathcal{U}_{ad}$ and a subsequence of
$\{\chi(\O_n)\}_n,$ still denoted by $\{\chi(\O_n)\}_n,$ such that
$$
\chi(\O_n)\rightarrow \chi(\O^*)\ \ \hbox{in}\ L^1(\Omega)\
\hbox{as}\ n\rightarrow\infty.
$$
Now it suffices to show that $\O^*$ is indeed the unique minimizer
of (\ref{minimizationpb}). Actually, repeating the same argument as
that in the proof of Theorem \ref{theorem2.2}, we can derive
\begin{align}\label{es44}
\psi(\O_n)\rightharpoonup\psi(\O^*)\ \ \hbox{in}\ \ H^1(\Omega)\
\hbox{as}\ n\rightarrow\infty.
\end{align}
up to taking a further subsequence. Gathering (\ref{es4}) and
(\ref{es44}), we obtain
\begin{align*}
\psi(\O_n)-\psi^d_n\rightharpoonup\psi(\O^*)-\psi^d\ \ \hbox{in}\ \
H^1(\Omega)\ \hbox{as}\ n\rightarrow\infty.
\end{align*}
Consequently, for any $\O\in\mathcal{U}_{ad},$ again we take
advantage of the the lower semi-continuity of the $L^2$-norm and the
lower semi-continuity of the perimeter to deduce
\begin{align*}
\mathrm{J}(\O^*)&=\int_{\Omega\backslash\overline{\O^*}}\Big|\psi^*-\psi^d\Big|^2\dx+\rho\mathcal{P}(\O^*,\Omega)\\
&\leq\lim_{n\rightarrow\infty}\inf\int_{\Omega\backslash\overline{\O_n}}\Big|\psi(\O_n)-\psi^d_n\Big|^2\dx+\rho\lim_{n\rightarrow\infty}\inf\mathcal{P}(\O_n,\Omega)\\
&\leq\lim_{n\rightarrow\infty}\inf\Big[\int_{\Omega\backslash\overline{\O_n}}\Big|\psi(\O_n)-\psi^d_n\Big|^2\dx+\rho\mathcal{P}(\O_n,\Omega)\Big]\\
&\leq\lim_{n\rightarrow\infty}\Big[\int_{\Omega\backslash\overline{\O}}\Big|\psi(\O)-\psi^d_n\Big|^2\dx+\rho\mathcal{P}(\O,\Omega)\Big]\\
&=\int_{\Omega\backslash\overline{\O}}\Big|\psi(\O)-\psi^d\Big|^2\dx+\rho\mathcal{P}(\O,\Omega)=\mathrm{J}(\O),\
\ \forall\O\in\mathcal{U}_{ad},
\end{align*}
which verifies that $\O^*$ is the minimizer of
(\ref{minimizationpb}).
\end{proof}

To solve the minimization problem (\ref{minimizationpb}), we
introduce the topological sensitivity analysis method.

\section{Topological sensitivity analysis}\label{section4}
In this section, we derive the asymptotic expansion of the cost
functional $\K$ with respect to the insertion of a small obstacle
$\O_{z,\eps}\subset\subset\Omega$ that is centered at $z\in\Omega$
and has the form $\O_{z,\eps}=z+\eps\omega$ where $\eps$ is a small
parameter and $\omega$ is a given bounded domain.

In the presence of the perturbed obstacle $\O_{z,\eps},$ the
velocity $\psi_\eps$ and the pressure $p_\eps$ solve the following
Brinkmann problem:
\begin{equation}\label{2}
\left\{
\begin{array}{c}
\begin{array}{r l l l}
-\nu\Delta \psi_\eps+\alpha \psi_\eps+\nabla p_\eps & = 0& \mbox{in } & { \Omega\backslash \overline{{\O_{z,\eps}}},} \\
 \mbox{div } \psi_\eps & =  0 & \mbox{in } &{ \Omega\backslash \overline{{\O_{z,\eps}}},}\\
  \psi_\eps& =0 &  \mbox {on }&{  \Gamma,}\\
  \sigma(\psi_\eps,p_\eps)\textbf{n}& =g &  \mbox {on }&{  \Sigma,}\\
\psi_\eps & =  0 & \mbox {on }&{\partial\O_{z,\eps}}.
\end{array}
\end{array}
 \right.
\end{equation}
Using the penalization technique used in the finite element method
for the implementation of a Dirichlet condition, we can rewrite
problem (\ref{2}) as
\begin{equation}\label{22}
\left\{
\begin{array}{c}
\begin{array}{r l l l}
-\nu\Delta \psi_\eps+\alpha \psi_\eps+\delta c_\eps\psi_\eps+\nabla p_\eps & = 0& \mbox{in } & { \Omega,} \\
 \mbox{div } \psi_\eps & =  0 & \mbox{in } &{ \Omega,}\\
  \psi_\eps& =0 &  \mbox {on }&{  \Gamma,}\\
  \sigma(\psi_\eps,p_\eps)\textbf{n}& =g &  \mbox {on }&{  \Sigma,}\\
\end{array}
\end{array}
 \right.
\end{equation}
where $\delta c_\eps$ is a piecewise constant function defined by
$$
\delta c_\eps(x)=\left\{
                   \begin{array}{ll}
                     k & \hbox{if}\ x\in\O_{z,\eps}, \\
                     0 & \hbox{if}\
x\in\Omega\backslash\overline{\O_{z,\eps}},
                   \end{array}
                 \right.
$$
where $k$ is large enough. The weak form associated with (\ref{22})
reads:
\begin{eqnarray}\label{fv}
\left\{
  \begin{array}{lll}
    \hbox{Find}\ \psi_\eps\in\mathcal{X}_\Gamma \ \hbox{such that},\\
    \mathcal{A}_\eps(\psi_\eps,v)=l_\eps(v),\ \ \forall v\in\mathcal{X}_\Gamma,
  \end{array}
\right.
\end{eqnarray}
where the functional space $\mathcal{X}_\Gamma,$ the bilinear form
$\mathcal{A}_\eps,$ and the linear form $l_\eps$ are defined by
\begin{align}
\mathcal{X}_\Gamma&=\left\{v\in H^1(\Omega)\ \hbox{such that}\
\hbox{div}\ v=0 \ \hbox{and}\ v=0\ \hbox{on}\ \Gamma \right\},\\
\label{biliniearform}\mathcal{A}_\eps(\psi_\eps,v)&=\int_{\Omega}\nu\nabla\psi_\eps:\nabla v\ \dx+\int_{\Omega}(\alpha+\delta c_\eps)\psi_\eps. v\ \dx,\\
l_\eps(v)&=\int_\Sigma g.  v\ \ds.\label{linearform}
\end{align}
With above statements, we deduce that the cost functional $\K$ is
then defined in the perturbed domain as
\begin{align}\label{Jnew}
\K({\O_{z,\eps}},\psi_\eps)=\mathcal{J}(\eps)=\int_\Omega\Big|\psi_\eps-\psi^d\Big|^2\dx+\rho\mathcal{P}(\O_{z,\varepsilon},\Omega).
\end{align}
In the particular case $\omega_{z,\eps}=\emptyset$ (i.e, $\eps=0$),
the cost functional $\K$ is defined by $L^2$-norm without the
regularization term:
\begin{align}\label{Jnew0}
\K(\emptyset,\psi_0)=\mathcal{J}(0)=\int_\Omega\Big|\psi_0-\psi^d\Big|^2\dx,
\end{align}
where $\psi_0$ is the solution to
\begin{equation}\label{00}
\left\{
\begin{array}{c}
\begin{array}{r l l l}
-\nu\Delta \psi_0+\alpha \psi_0+\nabla p_0 & = 0& \mbox{in } & { \Omega,} \\
 \mbox{div } \psi_0 & =  0 & \mbox{in } &{ \Omega,}\\
  \psi_0& =0 &  \mbox {on }&{  \Gamma,}\\
  \sigma(\psi_0,p_0)\textbf{n}& =g &  \mbox {on }&{  \Sigma.}
\end{array}
\end{array}
 \right.
\end{equation}

The main objective of the following consists in establishing an
asymptotic expansion for $\mathcal{J}$ in order to determine the
location and shape of $\O.$ Before that, we need the following
preliminary lemmas.

\begin{lemma}\label{lemma3}
Let $\psi_\eps$ and $\psi_0$ be the solutions to the problems
(\ref{22}) and (\ref{00}), respectively. Then, there exists a
positive constant $c$ independent of $\varepsilon$ such that
$$
\Big\|\psi_\eps-\psi_0\Big\|_{H^1(\Omega)}\leq c\ \eps^{1+\tau},
$$
for any $0<\tau<1.$
\end{lemma}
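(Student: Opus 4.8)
The plan is to estimate the difference $w_\eps := \psi_\eps - \psi_0$ by subtracting the two weak formulations and exploiting the coercivity of the bilinear form on the divergence-free subspace $\mathcal{X}_\Gamma$. Writing $e_\eps := w_\eps$, both $\psi_\eps$ and $\psi_0$ solve variational problems with the \emph{same} linear form $l_\eps(v)=\int_\Sigma g\cdot v\,\ds$; the only discrepancy is the extra zeroth-order term $\int_\Omega \delta c_\eps\,\psi_\eps\cdot v\,\dx$ present in $(\ref{fv})$. Hence for all $v\in\mathcal{X}_\Gamma$,
\begin{align*}
\int_\Omega \nu\,\nabla e_\eps:\nabla v\,\dx + \int_\Omega \alpha\, e_\eps\cdot v\,\dx = -\int_{\O_{z,\eps}} k\,\psi_\eps\cdot v\,\dx.
\end{align*}
Taking $v=e_\eps$ and using $\nu,\alpha>0$ gives the a priori bound
$$
\min(\nu,\alpha)\,\|e_\eps\|_{H^1(\Omega)}^2 \le k\int_{\O_{z,\eps}} |\psi_\eps|\,|e_\eps|\,\dx \le k\,\|\psi_\eps\|_{L^2(\O_{z,\eps})}\,\|e_\eps\|_{L^2(\O_{z,\eps})}.
$$

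The key point is therefore to control $\|\psi_\eps\|_{L^2(\O_{z,\eps})}$: the right-hand side is supported only on the shrinking set $\O_{z,\eps}=z+\eps\omega$, whose measure is $O(\eps^2)$. First I would establish the uniform bound $\|\psi_\eps\|_{H^1(\Omega)}\le c\|g\|_{H^{-1/2}(\Sigma)}$, independent of $\eps$, by the same coercivity argument applied directly to $(\ref{fv})$ (the penalization term only helps coercivity). Then, to convert this $H^1$ bound into a bound on a small ball of radius $\eps$, I would use the continuous embedding $H^1(\Omega)\hookrightarrow L^q(\Omega)$ for every finite $q$ (valid in dimension $2$) together with Hölder's inequality on $\O_{z,\eps}$: for $q>2$ and its conjugate $q'$,
$$
\|\psi_\eps\|_{L^2(\O_{z,\eps})} \le |\O_{z,\eps}|^{\frac1{2}-\frac1q}\,\|\psi_\eps\|_{L^q(\O_{z,\eps})} \le C\,\eps^{2(\frac12-\frac1q)}\,\|\psi_\eps\|_{H^1(\Omega)} = C\,\eps^{1-\frac2q}\,\|g\|_{H^{-1/2}(\Sigma)}.
$$
Choosing $q$ large enough that $1-\tfrac2q \ge 1+\tau - (1+\tau)/?$ — more simply, for any prescribed $\tau\in(0,1)$ pick $q$ with $1-\tfrac2q \ge \tfrac{1+\tau}{2}+\tfrac{\tau}{2}$ — one extracts the exponent $\eps^{1+\tau}$ after combining the two factors $\|\psi_\eps\|_{L^2(\O_{z,\eps})}$ and $\|e_\eps\|_{L^2(\O_{z,\eps})}$ (the latter also bounded by $C\eps^{1-2/q}\|e_\eps\|_{H^1}$).

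Putting these together: the energy inequality becomes
$$
\min(\nu,\alpha)\,\|e_\eps\|_{H^1(\Omega)}^2 \le k\,C^2\,\eps^{2(1-\frac2q)}\,\|g\|_{H^{-1/2}(\Sigma)}\,\|e_\eps\|_{H^1(\Omega)},
$$
so $\|e_\eps\|_{H^1(\Omega)}\le c\,\eps^{2(1-2/q)}$, and choosing $q$ so that $2(1-2/q) \ge 1+\tau$ (i.e. $q\ge \frac{4}{1-\tau}$, which is finite for $\tau<1$) yields exactly $\|\psi_\eps-\psi_0\|_{H^1(\Omega)}\le c\,\eps^{1+\tau}$. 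I expect the main obstacle to be bookkeeping the embedding constant's dependence: the Sobolev constant $C=C(q,\Omega)$ blows up as $q\to\infty$, so one cannot send $q\to\infty$; the point is only that \emph{for each fixed} $\tau<1$ a finite admissible $q$ exists, which is why the statement restricts to $\tau<1$ rather than allowing $\tau=1$. A secondary technical point is justifying that $\psi_\eps-\psi_0\in\mathcal{X}_\Gamma$ is a legitimate test function (both satisfy the same Dirichlet/Neumann split on $\Gamma\cup\Sigma$ and the divergence-free constraint), which is immediate from the definitions.
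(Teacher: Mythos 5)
Your proof is correct, but it takes a genuinely different decomposition from the paper's. Both arguments subtract the weak formulations of (\ref{22}) and (\ref{00}), test with the difference $e_\eps=\psi_\eps-\psi_0$, and exploit the $O(\eps^2)$ measure of $\O_{z,\eps}$ via H\"older and the two-dimensional Sobolev embedding. The paper, however, keeps the full coefficient $(\alpha+\delta c_\eps)$ acting on the difference, so that the right-hand side of the energy identity is $-k\int_{\O_{z,\eps}}\psi_0\cdot e_\eps\,\dx$; it then uses the interior regularity (local boundedness) of the unperturbed solution $\psi_0$ near $z$ to get $\|\psi_0\|_{L^2(\O_{z,\eps})}\le c\,\eps$, and applies H\"older plus $H^1\hookrightarrow L^{2p}$ only to $e_\eps$, which yields the factor $\eps^{\tau}$ with $\tau=1/q$ and hence $\eps^{1+\tau}$ for every $0<\tau<1$. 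You instead leave $-k\int_{\O_{z,\eps}}\psi_\eps\cdot e_\eps\,\dx$ on the right; since no $\eps$-uniform pointwise information on $\psi_\eps$ is available, you compensate with the uniform a priori bound $\|\psi_\eps\|_{H^1(\Omega)}\le C\|g\|_{H^{-1/2}(\Sigma)}$ (valid because $\delta c_\eps\ge 0$ only improves coercivity) and apply the H\"older/Sobolev step to both factors, obtaining $\eps^{2-4/q}$ and then choosing $q\ge 4/(1-\tau)$. Both routes deliver exactly the stated range $0<\tau<1$ with a constant depending on $k$ but not on $\eps$; yours has the small advantage of not invoking the smoothness of $\psi_0$ on $\O_{z,\eps}$, at the price of a second embedding constant and the extra a priori bound for the penalized problem, while the paper's is slightly shorter once that interior regularity is granted. (Your garbled intermediate condition ``$1-\tfrac2q\ge\tfrac{1+\tau}{2}+\tfrac{\tau}{2}$'' is superseded by the correct final requirement $2(1-2/q)\ge 1+\tau$, so it creates no actual gap.)
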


\begin{proof}
From (\ref{22}) and (\ref{00}) and using Green's formula, we obtain
\begin{align}
\nonumber\int_{\Omega}\nu\nabla\Big(\psi_\eps-\psi_0\Big):\nabla v\
\dx&+\int_{\Omega}(\alpha+\delta c_\eps)\Big(\psi_\eps-\psi_0\Big).
v\
\dx\\
&\ \ \ +\int_{\Omega}\delta c_\eps\psi_0. v\ \dx=0\ \forall
v\in\mathcal{X}_\Gamma.\label{al1}
\end{align}
By taking $v=\psi_\eps-\psi_0$ in (\ref{al1}) as a test function, we
get
\begin{align*}
\int_{\Omega}\nu\Big|\nabla\Big(\psi_\eps-\psi_0\Big)\Big|^2
\dx+\int_{\Omega}(\alpha+\delta c_\eps)\Big|\psi_\eps-\psi_0\Big|^2
\dx=-\int_{\omega_{z,\eps}}\psi_0. \Big(\psi_\eps-\psi_0\Big) \dx.
\end{align*}
From the Cauchy-Schwarz inequality and the smoothness of $\psi_0$ in
$\omega_{z,\eps},$ there exists a positive constant $c_1$
independent of $\eps$ such that
\begin{align*}
\int_{\Omega}\nu\Big|\nabla\Big(\psi_\eps-\psi_0\Big)\Big|^2
\dx+\int_{\Omega}(\alpha+\delta c_\eps)\Big|\psi_\eps-\psi_0\Big|^2
\dx&\leq\Big\|\psi_0\Big\|_{L^2(\omega_{z,\eps})}
\Big\|\psi_\eps-\psi_0\Big\|_{L^2(\omega_{z,\eps})}\\
&\leq c_1\eps \Big\|\psi_\eps-\psi_0\Big\|_{L^2(\omega_{z,\eps})}.
\end{align*}
Notice that, H\"{o}lder inequality and the Sobolev embedding theorem
can be used to derive
\begin{align*}
\Big\|\psi_\eps-\psi_0\Big\|_{L^2(\omega_{z,\eps})}\leq
c_2\eps^{1/q}\Big\|\psi_\eps-\psi_0\Big\|_{L^{2p}(\omega_{z,\eps})}\leq
c_3\eps^\tau\Big\|\psi_\eps-\psi_0\Big\|_{H^1(\Omega)},
\end{align*}
for any $1<q<\infty$ with $1/p+1/q=1.$ Let us denote $\tau=1/q$
which implies $0<\tau<1.$ Therefore,
\begin{align*}
\int_{\Omega}\nu\Big|\nabla\Big(\psi_\eps-\psi_0\Big)\Big|^2
\dx+\int_{\Omega}(\alpha+\delta c_\eps)\Big|\psi_\eps-\psi_0\Big|^2
\dx\leq c_4\eps^{\tau+1} \Big\|\psi_\eps-\psi_0\Big\|_{H^1(\Omega)}.
\end{align*}
On the other hand, we have
\begin{align*}
\min\{\nu,\alpha\}\Big\|\psi_\eps-\psi_0\Big\|^2_{H^1(\Omega)}\leq\int_{\Omega}\nu\Big|\nabla\Big(\psi_\eps-\psi_0\Big)\Big|^2
\dx+\int_{\Omega}(\alpha+\delta c_\eps)\Big|\psi_\eps-\psi_0\Big|^2
\dx.
\end{align*}
Therefore,
\begin{align*}
\Big\|\psi_\eps-\psi_0\Big\|_{H^1(\Omega)}\leq\frac{c_4
\varepsilon^{\tau+1}}{\min\{\nu,\alpha\}}=c\varepsilon^{\tau+1}\ \
\hbox{with}\ \ c=\frac{c_4}{\min\{\nu,\alpha\}}.
\end{align*}
\end{proof}

\begin{lemma}\label{proposition}The cost functional $\K$ is differential with respect to
$\psi_0,$ such that
\begin{align}\label{A1}
D\K(\emptyset,\psi_0)w=2\int_\Omega\Big(\psi_0-\psi^d\Big).w\ \dx\ \
\forall w\in H^1(\Omega)
\end{align}
and we have \begin{align}\label{A2}
\K(\O_{z,\eps},\psi_\eps)-\K(\emptyset,\psi_0)=D\K(\emptyset,\psi_0)(\psi_\eps-\psi_0)+o(\eps^2).
\end{align}
\end{lemma}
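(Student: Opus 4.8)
The plan is to prove the two assertions of Lemma~\ref{proposition} separately, starting with the differentiability of $\K$ and then establishing the second-order expansion~(\ref{A2}).

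For the first part, I would observe that when $\eps=0$ the regularization term is absent, so $\K(\emptyset,\psi_0)=\int_\Omega|\psi_0-\psi^d|^2\dx=\mathcal{J}(0)$ is simply a quadratic functional of $\psi_0$. Viewing $\psi\mapsto\int_\Omega|\psi-\psi^d|^2\dx$ as a map from $H^1(\Omega)$ to $\mathbb{R}$, its Fr\'echet derivative at $\psi_0$ in the direction $w$ is computed by expanding
\begin{align*}
\int_\Omega\Big|\psi_0+w-\psi^d\Big|^2\dx-\int_\Omega\Big|\psi_0-\psi^d\Big|^2\dx=2\int_\Omega\Big(\psi_0-\psi^d\Big).w\ \dx+\int_\Omega|w|^2\dx,
\end{align*}
and since $\|w\|_{L^2(\Omega)}^2\leq\|w\|_{H^1(\Omega)}^2=o(\|w\|_{H^1(\Omega)})$ as $w\to0$, the linear term is exactly the derivative, giving~(\ref{A1}). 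Continuity of the linear functional $w\mapsto 2\int_\Omega(\psi_0-\psi^d).w\,\dx$ on $H^1(\Omega)$ is immediate from Cauchy--Schwarz.

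For the second part, I would write $\K(\O_{z,\eps},\psi_\eps)-\K(\emptyset,\psi_0)$ explicitly. The difference of the $L^2$-terms is handled by the Taylor expansion above with $w=\psi_\eps-\psi_0$, yielding $D\K(\emptyset,\psi_0)(\psi_\eps-\psi_0)+\|\psi_\eps-\psi_0\|_{L^2(\Omega)}^2$; by Lemma~\ref{lemma3}, $\|\psi_\eps-\psi_0\|_{L^2(\Omega)}^2\leq\|\psi_\eps-\psi_0\|_{H^1(\Omega)}^2\leq c^2\eps^{2(1+\tau)}=o(\eps^2)$ since $\tau>0$. The remaining contribution is the regularization term $\rho\mathcal{P}(\O_{z,\eps},\Omega)$, and here the key point is that $\O_{z,\eps}=z+\eps\omega$ is a rescaled fixed domain, so its relative perimeter scales linearly: $\mathcal{P}(\O_{z,\eps},\Omega)=\eps\,\mathcal{P}(\omega,\mathbb{R}^2)$ for $\eps$ small enough that $\O_{z,\eps}\subset\subset\Omega$. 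Hence $\rho\mathcal{P}(\O_{z,\eps},\Omega)=O(\eps)$, which is \emph{not} $o(\eps^2)$.

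This last observation is the main obstacle, and I expect the resolution to be a convention already implicit in the paper: the perimeter term, being of order $\eps$, must either be absorbed into the definition of the expansion at the order $f(\eps)$ actually used later (the topological gradient will be read off at order $f(\eps)$, and one checks in Section~\ref{section4} whether $f(\eps)\sim\eps$ or $f(\eps)\sim\eps^2$), or the statement~(\ref{A2}) is to be understood with the perimeter contribution either omitted or treated as a separate known term. Given the way~(\ref{A2}) is written, the intended reading is that the $o(\eps^2)$ collects the $L^2$-remainder while the linear-in-$\eps$ perimeter term is carried along elsewhere; I would therefore state the estimate as
\begin{align*}
\K(\O_{z,\eps},\psi_\eps)-\K(\emptyset,\psi_0)-\rho\mathcal{P}(\O_{z,\eps},\Omega)=D\K(\emptyset,\psi_0)(\psi_\eps-\psi_0)+o(\eps^2),
\end{align*}
and note that if the regularization parameter is itself taken of order $\eps$ (a common device), the perimeter contribution is genuinely $o(\eps)$ and one recovers~(\ref{A2}) verbatim at the orders that matter. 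The routine parts---the quadratic expansion and the application of Lemma~\ref{lemma3}---are straightforward; the care needed is entirely in bookkeeping the order of the perimeter term consistently with the rest of Section~\ref{section4}.
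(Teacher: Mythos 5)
Your handling of the two substantive pieces --- the Fr\'echet derivative of the quadratic term and the estimate $\|\psi_\eps-\psi_0\|_{L^2(\Omega)}^2\le c^2\eps^{2(1+\tau)}=o(\eps^2)$ via Lemma~\ref{lemma3} --- coincides with the paper's own argument (the paper declares the first ``trivial'' and writes out exactly your expansion for the second). The divergence is the perimeter term, and there your proposal does not actually establish (\ref{A2}) as stated: you end with a modified identity in which $\rho\,\mathcal{P}(\O_{z,\eps},\Omega)$ is moved to the left-hand side, together with the remark that taking $\rho$ of order $\eps$ ``recovers (\ref{A2}) verbatim at the orders that matter.'' That remark is not correct as written: with $\rho\sim\eps$ and $\mathcal{P}(\O_{z,\eps},\Omega)=\eps\,\mathcal{P}(\omega,\mathbb{R}^2)$ the regularization contributes a term of exact order $\eps^2$, which is $o(\eps)$ but not $o(\eps^2)$, and $\eps^2$ is precisely the order at which the topological gradient is read off in Theorem~\ref{theorem2.4}; so (\ref{A2}) would fail and the expansion of Theorem~\ref{theorem2.4} would pick up an extra additive $\eps^2\mathcal{P}(\omega)$-term.

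What the paper actually does is couple the regularization parameter to $\eps$ inside the proof: it sets $\rho=\eps^3$, so that $\rho\,\mathcal{P}(\O_{z,\eps},\Omega)=o(\eps^2)$ already because the perimeter is finite (with your scaling it is even $O(\eps^4)$). Your diagnosis of the obstacle is therefore exactly right --- for a fixed $\rho$ the perimeter term is only $O(\eps)$ and (\ref{A2}) cannot hold --- but the missing ingredient is the choice $\rho=\rho(\eps)=o(\eps)$ (any such choice works, $\eps^3$ being the paper's), not $\rho\sim\eps$, and not the reinterpretation of (\ref{A2}) with the perimeter carried separately; the latter is an honest alternative statement, but it is a different lemma and would require a corresponding modification of Theorem~\ref{theorem2.4}. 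The part of your argument that is complete, namely (\ref{A1}) and the $L^2$-remainder estimate, is correct and is the same route the paper takes.
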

\begin{proof} The verification of the differentiability of $\K$ with respect to $\psi_0$ such that
\begin{align*}
D\K(\emptyset,\psi_0)w=2\int_\Omega\Big(\psi_0-\psi^d\Big).w\ \dx\ \
\forall w\in H^1(\Omega)
\end{align*}
is trivial.

By subtracting (\ref{Jnew0}) from (\ref{Jnew}), we have
\begin{align}\label{ASS}
\nonumber&\K({\O_{z,\eps}},\psi_\eps)-\K(\emptyset,\psi_0)=\int_\Omega\Big|\psi_\eps-\psi^d\Big|^2\dx-\int_\Omega\Big|\psi_0-\psi^d\Big|^2\dx+\rho\mathcal{P}(\O_{z,\eps},\Omega)\\
\nonumber&=\int_\Omega\Big|\Big(\psi_\eps-\psi_0\Big)+\Big(\psi_0-\psi^d\Big)\Big|^2\dx-\int_\Omega\Big|\psi_0-\psi^d\Big|^2\dx+\rho\mathcal{P}(\O_{z,\eps},\Omega)\\
\nonumber&=2\int_\Omega\Big(\psi_\eps-\psi_0\Big).\Big(\psi_0-\psi^d\Big)\dx+\int_\Omega\Big|\psi_\eps-\psi_0\Big|^2\dx+\rho\mathcal{P}(\O_{z,\eps},\Omega)\\
&=D\K(\emptyset,\psi_0)(\psi_\eps-\psi_0)+\int_\Omega\Big|\psi_\eps-\psi_0\Big|^2\dx+\rho\mathcal{P}(\O_{z,\eps},\Omega),
\end{align}
where
$$D\K(\emptyset,\psi_0)(\psi_\eps-\psi_0)=2\int_\Omega\Big(\psi_\eps-\psi_0\Big).\Big(\psi_0-\psi^d\Big)\dx.$$
Using Lemma \ref{lemma3}, the second term on the right-hand-side of
the equality (\ref{ASS}) admits the following estimate
$$
\int_\Omega\Big|\psi_\eps-\psi_0\Big|^2\dx=o(\varepsilon^2).
$$
To estimate the last term in the right-hand-side of (\ref{ASS}), we
need to take $\rho=\varepsilon^3$ then,
\begin{align*}
\rho\mathcal{P}(\O_{z,\varepsilon},\Omega)=o(\varepsilon^2),
\end{align*}
since $\mathcal{P}(\O_{z,\varepsilon},\Omega)<+\infty.$ Therefore;
\begin{align*}
\K({\O_{z,\eps}},\psi_\eps)-\K(\emptyset,\psi_0)=D\K(\emptyset,\psi_0)(\psi_\eps-\psi_0)+o(\eps^2).
\end{align*}
\end{proof}

Now, we are ready to state our main result of this section.

\begin{theorem}\label{theorem2.4} Let
$\O_{z,\varepsilon}=z+\varepsilon\O$ be a small obstacle in the
fluid flow domain $\Omega$ and let $\mathcal{J}$ be a cost function
of the form
\begin{align*}
\mathcal{J}(\eps)=\int_\Omega\Big|\psi_\eps-\psi^d\Big|^2\dx+\rho\mathcal{P}(\O_{z,\varepsilon},\Omega).
\end{align*}
Then the cost function $\mathcal{J}$ has the following asymptotic
expansion:
$$
\mathcal{J}(\eps)-\mathcal{J}(0)=k|\O|\eps^2\mathcal{G}(z)+o(\eps^2),
$$
where $|\O|$ is the Lebesgue measure (volume) of $\O$ and
$\mathcal{G}$ is the topological gradient defined in $\Omega$ by
$$
\mathcal{G}(z)=\psi_0(z).\vartheta_0(z),
$$
with $\vartheta_0$ is the solution to the adjoint problem: find
$\vartheta_0\in\mathcal{X}_\Gamma$ such that
\begin{align}\label{adjoint}
 \mathcal{A}_0(w,\vartheta_0)=-D\K(\emptyset,\psi_0)w\ \ \forall
w\in\mathcal{X}_\Gamma.
\end{align}
\end{theorem}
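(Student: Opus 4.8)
The plan is to start from Lemma~\ref{proposition}, which has already reduced the expansion of $\mathcal{J}$ to the first-order term $D\K(\emptyset,\psi_0)(\psi_\eps-\psi_0)$ modulo $o(\eps^2)$, and then to convert this (a priori global) quantity into an integral supported on the small inclusion $\O_{z,\eps}$ by feeding the variational identity~(\ref{al1}) into the adjoint equation~(\ref{adjoint}). Once the expression is localized on $\O_{z,\eps}$, a change of variables $x=z+\eps y$ together with interior regularity of $\psi_0$ and $\vartheta_0$ near $z$ extracts the leading term $k|\O|\eps^2\,\psi_0(z)\cdot\vartheta_0(z)$, while Lemma~\ref{lemma3} controls the remainder.

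Concretely, by~(\ref{A2}) we have $\mathcal{J}(\eps)-\mathcal{J}(0)=D\K(\emptyset,\psi_0)(\psi_\eps-\psi_0)+o(\eps^2)$. Since $\psi_\eps-\psi_0\in\mathcal{X}_\Gamma$, taking $w=\psi_\eps-\psi_0$ in the adjoint problem~(\ref{adjoint}) gives
$$D\K(\emptyset,\psi_0)(\psi_\eps-\psi_0)=-\mathcal{A}_0(\psi_\eps-\psi_0,\vartheta_0).$$
Next I would rewrite~(\ref{al1}), obtained by subtracting the weak forms of~(\ref{22}) and~(\ref{00}), using $\mathcal{A}_\eps=\mathcal{A}_0+\int_\Omega\delta c_\eps(\cdot)\cdot(\cdot)\,\dx$, to get
$$\mathcal{A}_0(\psi_\eps-\psi_0,v)=-\int_\Omega\delta c_\eps\,\psi_0\cdot v\,\dx-\int_\Omega\delta c_\eps\,(\psi_\eps-\psi_0)\cdot v\,\dx,\quad v\in\mathcal{X}_\Gamma.$$
Choosing $v=\vartheta_0$ and recalling that $\delta c_\eps=k$ on $\O_{z,\eps}$ and vanishes elsewhere yields
$$D\K(\emptyset,\psi_0)(\psi_\eps-\psi_0)=k\int_{\O_{z,\eps}}\psi_0\cdot\vartheta_0\,\dx+k\int_{\O_{z,\eps}}(\psi_\eps-\psi_0)\cdot\vartheta_0\,\dx.$$

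For the first integral I would use the dilation $x=z+\eps y$, so that
$$k\int_{\O_{z,\eps}}\psi_0\cdot\vartheta_0\,\dx=k\eps^2\int_\O\psi_0(z+\eps y)\cdot\vartheta_0(z+\eps y)\,\dy=k|\O|\eps^2\,\psi_0(z)\cdot\vartheta_0(z)+o(\eps^2),$$
the last step following from the continuity of $\psi_0$ and $\vartheta_0$ in a neighbourhood of $z$ (interior regularity away from $\partial\Omega$). For the second integral, Cauchy--Schwarz together with Lemma~\ref{lemma3} gives $\|\psi_\eps-\psi_0\|_{L^2(\O_{z,\eps})}\le\|\psi_\eps-\psi_0\|_{H^1(\Omega)}\le c\,\eps^{1+\tau}$ and $\|\vartheta_0\|_{L^2(\O_{z,\eps})}\le c\,\eps$, hence this term is $O(\eps^{2+\tau})=o(\eps^2)$. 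Collecting the pieces gives $\mathcal{J}(\eps)-\mathcal{J}(0)=k|\O|\eps^2\,\psi_0(z)\cdot\vartheta_0(z)+o(\eps^2)$, i.e. $\mathcal{G}(z)=\psi_0(z)\cdot\vartheta_0(z)$.

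I expect the main obstacle to be the bookkeeping in the middle step: correctly separating $\mathcal{A}_\eps$ from $\mathcal{A}_0$ on the inclusion, and verifying that the cross term $k\int_{\O_{z,\eps}}(\psi_\eps-\psi_0)\cdot\vartheta_0\,\dx$ is genuinely $o(\eps^2)$ — this is exactly where the sharper rate $\eps^{1+\tau}$ from Lemma~\ref{lemma3}, rather than the naive $O(\eps)$, is needed. The change of variables and the local $L^\infty$ bounds on $\psi_0,\vartheta_0$ are routine once interior regularity is invoked.
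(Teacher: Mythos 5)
Your argument is correct and follows essentially the same route as the paper: combine the expansion (\ref{A2}) with the adjoint equation (\ref{adjoint}) to trade $D\K(\emptyset,\psi_0)(\psi_\eps-\psi_0)$ for $-\mathcal{A}_0(\psi_\eps-\psi_0,\vartheta_0)$, localize this onto $\O_{z,\eps}$ as $k\int_{\O_{z,\eps}}\psi_\eps\cdot\vartheta_0\,\dx$, and conclude by the change of variables $x=z+\eps y$ plus the estimate of Lemma \ref{lemma3} for the cross term. The only difference is cosmetic: the paper packages the same cancellation through the Lagrangian $\mathcal{L}_\eps(u,v)=\K(\O_{z,\eps},u)+\mathcal{A}_\eps(u,v)-l_\eps(v)$, whereas you obtain the localized term directly from the error identity (\ref{al1}); the intermediate quantities and estimates coincide.
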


\begin{proof}Let us consider the Lagrangian $\mathcal{L}_\varepsilon$ defined by
$$
\mathcal{L}_\eps(u,v)=\K(\O_{z,\varepsilon},u)+\mathcal{A}_\eps(u,v)-l_\eps(v)\
\ \forall u,\ v\in\mathcal{X}_\Gamma.
$$
By setting $u=\psi_\varepsilon$ in the above equality and using that
$\psi_\varepsilon$ is the weak solution to (\ref{fv}), we obtain
$$
\mathcal{L}_\eps(\psi_\eps,v)=\K(\O_{z,\eps},\psi_\eps)\ \ \forall
v\in\mathcal{X}_\Gamma.
$$
Hence,
\begin{align}\label{VL}
\nonumber&\mathcal{J}(\eps)-\mathcal{J}(0)=\mathcal{L}_\eps(\psi_\eps,v)-\mathcal{L}_0(\psi_0,v)\\
&=\K(\O_{z,\eps},\psi_\eps)-\K(\emptyset,\psi_0)+\mathcal{A}_\eps(\psi_\eps,v)-\mathcal{A}_0(\psi_0,v)+l_0(v)-l_\eps(v).
\end{align}
The linear form $l$ is independent of $\eps,$ then
\begin{align}\label{variationl}
l_0(v)-l_\eps(v)=0,\ \ \forall v\in\mathcal{X}_\Gamma.
\end{align}
For all $v\in\mathcal{X}_\Gamma,$ the variation of the bilinear form
is given by
\begin{align*}
&\mathcal{A}_\eps(\psi_\eps,v)-\mathcal{A}_0(\psi_0,v)\\
&=\int_\Omega\nu\nabla\Big(\psi_\eps-\psi_0\Big):\nabla v
\dx+\int_\Omega\alpha\Big(\psi_\eps-\psi_0\Big).v
\dx+\int_{\O_{z,\eps}}k\psi_\eps.v \dx\\
&=\mathcal{A}_0(\psi_\eps-\psi_0,v)+\int_{\O_{z,\eps}}k\psi_\eps.v
\dx.
\end{align*}
Choosing $v=\vartheta_0$ in the above equality, where $\vartheta_0$
is solution to (\ref{adjoint}), we obtain
\begin{align*}
\mathcal{A}_\eps(\psi_\eps,\vartheta_0)-\mathcal{A}_0(\psi_0,\vartheta_0)=\mathcal{A}_0(\psi_\eps-\psi_0,\vartheta_0)+\int_{\O_{z,\eps}}k\psi_\eps.\vartheta_0\
\dx.
\end{align*}
By taking $w=\psi_\eps-\psi_0$ as a test function in
(\ref{adjoint}), we deduce that
\begin{align}\label{variationA}
\mathcal{A}_\eps(\psi_\eps,\vartheta_0)-\mathcal{A}_0(\psi_0,\vartheta_0)=-D\K(\emptyset,\psi_0)(\psi_\eps-\psi_0)+\int_{\O_{z,\eps}}k\psi_\eps.\vartheta_0\
\dx.
\end{align}
Then, it follows from (\ref{VL}), (\ref{variationl}),
(\ref{variationA}) and (\ref{A2}) that
\begin{align}
\mathcal{J}(\eps)-\mathcal{J}(0)=\int_{\O_{z,\eps}}k\psi_\eps.\vartheta_0\
\dx+o(\eps^2).
\end{align}
Now we prove that
$$
\int_{\O_{z,\eps}}k\psi_\eps.\vartheta_0\
\dx=k|\O|\eps^2\psi_0(z).\vartheta_0(z)+o(\eps^2).
$$
We have
\begin{align}\label{ww}
\int_{\O_{z,\eps}}k\psi_\eps.\vartheta_0\
\dx=k\int_{\O_{z,\eps}}\psi_0.\vartheta_0\
\dx+k\int_{\O_{z,\eps}}\Big(\psi_\eps-\psi_0\Big).\vartheta_0\ \dx.
\end{align}
Let us first focus on the first term in the right-hand side of
(\ref{ww}). Using the change of variable $x=z+\eps y$, we derive
\begin{align*}
k\int_{\O_{z,\eps}}\psi_0.\vartheta_0
\dx=k\eps^2&\int_{\O}\psi_0(z).\vartheta_0(z)
\dy\\
&+k\varepsilon^2\int_{\O}\Big[\psi_0(z+\varepsilon
y).\vartheta_0(z+\varepsilon y)-\psi_0(z).\vartheta_0(z)\Big]\dy.
\end{align*}
By the Taylor expansion and using the fact that $\psi_0$ and
$\vartheta_0$ are regular near $z,$ we deduce that
\begin{align*}
k\eps^2\int_{\O}\Big[\psi_0(z+\varepsilon
y).\vartheta_0(z+\varepsilon y)-\psi_0(z).\vartheta_0(z)\Big]\
\dy=O(\varepsilon^3).
\end{align*}
Hence,
\begin{align*}
k\int_{\O_{z,\eps}}\psi_0.\vartheta_0\
\dx=k|\O|\eps^2\psi_0(z).\vartheta_0(z)+o(\eps^2).
\end{align*}

For the other term in the right-hand side of (\ref{ww}) using
H\"{o}lder inequality, we derive
\begin{align*}
\Big|\int_{\O_{z,\eps}}\Big(\psi_\eps-\psi_0\Big).\vartheta_0\
\dx\Big|&\leq
\Big\|\vartheta_0\Big\|_{\mathrm{L}^2(\O_{z,\eps})}\Big\|\psi_\eps-\psi_0\Big\|_{\mathrm{L}^2(\O_{z,\eps})}\\
&\leq
\Big\|\vartheta_0\Big\|_{\mathrm{L}^2(\O_{z,\eps})}\Big\|\psi_\eps-\psi_0\Big\|_{\mathrm{H}^1(\Omega)}.
\end{align*}
We know using elliptic regularity that $\vartheta_0$ is uniformly
bounded in $\O_{z,\eps}$. Thus
$$
\Big\|\vartheta_0\Big\|_{\mathrm{L}^2(\O_{z,\eps})}^2=\int_{\O_{z,\eps}}\Big|\vartheta_0\Big|^2\
\dx\leq c\int_{\O}\eps^2=O(\eps^2).
$$
Consequently,
\begin{align*}
\Big|\int_{\O_{z,\eps}}\Big(\psi_\eps-\psi_0\Big).\vartheta_0\
\dx\Big|\leq c\
\eps\Big\|\psi_\eps-\psi_0\Big\|_{\mathrm{H}^1(\Omega)}.
\end{align*}
From Lemma \ref{lemma3}, we deduce that
\begin{align*}
\Big|\int_{\O_{z,\eps}}\Big(\psi_\eps-\psi_0\Big).\vartheta_0\
\dx\Big|\leq c\ \eps^{2+\tau}=o(\eps^2)
\end{align*}
and the proof is completed.
\end{proof}

\section{Numerical results}\label{numerique}
In this section, we present some numerical tests showing the
efficiency of the proposed method. The aim is to reconstruct the
location and shape of an unknown obstacle $\O$ inserted inside the
fluid flow domain $\Omega$ from internal data by using a level-set
curve of the topological gradient.

From Theorem \ref{theorem2.4}, the functional $\K$ has the following
topological asymptotic expansion:
\begin{align}\label{asyyy}
\K({\O_{z,\eps}},\psi_\eps)=\K(\emptyset,\psi_0)+\varepsilon^2k\mathcal{G}(z)+o(\eps^2),\end{align}
with the function $z\mapsto \mathcal{G}(z)$ is the topological
gradient defined by \begin{align}\label{gradient}
\mathcal{G}(z)=\psi_0(z).\vartheta_0(z),
\end{align}
where $\psi_0$ is the solution to (\ref{00}) and $\vartheta_0$ is
the solution of the adjoint problem
\begin{equation}\label{vv00}
\left\{
\begin{array}{c}
\begin{array}{r l l l}
-\nu\Delta \vartheta_0+\alpha \vartheta_0+\nabla p_0 & = -2(\psi_0-\psi^d)& \mbox{in } & { \Omega,} \\
 \mbox{div } \vartheta_0 & =  0 & \mbox{in } &{ \Omega,}\\
  \vartheta_0& =0 &  \mbox {on }&{  \Gamma,}\\
  \sigma(\vartheta_0,p_0)\textbf{n}& =0 &  \mbox {on }&{  \Sigma.}
\end{array}
\end{array}
 \right.
\end{equation}
The asymptotic expansion (\ref{asyyy}) motivates the reconstruction
technique: if we place obstacles in the zone where the topological
gradient $\mathcal{G}$ takes pronounced negative values, the error
function is expected to decrease, yielding a prediction of the
location, shape, size, and number of the obstacles. We identify then
a guess for $\Omega$ by considering the set
$$
\O_\gamma=\Big\{x\in\Omega;\ \
\mathcal{G}(x)\leq(1-\gamma)\min_{x\in\Omega}\mathcal{G}(x) \Big\},
$$
where $\mathcal{G}$ is defined by (\ref{gradient}) and
$0\leq\gamma\leq1$ is a constant that can be tuned. Therefore, the
unknown obstacle $\O$ is likely to be located at the zone where the
topological gradient $\mathcal{G}$ is the most negative. To make the
numerical simulations presented here, we use a $\mathbb{P}_2$ finite
elements discretization to solve the direct problem (\ref{00}) and
the adjoint problem (\ref{vv00}). The proposed numerical algorithm
is
based on the following main steps.\\
\textbf{One-shot algorithm.} \\
-Solve the direct problem (\ref{00}) and the adjoint problem
(\ref{vv00}).\\
-Compute the topological gradient $\mathcal{G}=\psi_0.\vartheta_0$ in $\Omega.$\\
-Reconstruct the unknown obstacle $\O.$

The location of $\O$ is given by the point $z\in\Omega$ where the
topological gradient $\mathcal{G}$ is most negative (i.e,
$z=\displaystyle \operatorname*{arg\,
min}_{x\in\Omega}\mathcal{G}(x)$). The size of $\O$ is approximated
as follows
$$
\O=\Big\{x\in\Omega;\ \
\mathcal{G}(x)\leq(1-\gamma^*)\min_{x\in\Omega}\mathcal{G}(x)
\Big\},
$$
where $\gamma^*\in(0,1)$ such that
$$\mathrm{J}(\O_{\gamma^*})\leq\mathrm{J}(\O_{\gamma})\
\forall \gamma\in(0,1),$$ with $ \O_{\gamma^*}=\Big\{x\in\Omega;\ \
\mathcal{G}(x)\leq(1-\gamma^*)\displaystyle\min_{x\in\Omega}\mathcal{G}(x)
\Big\}.$\\
The above topological gradient algorithm is classical and has been
used to solve various problems
\cite{carpio2019topological,le2019detection,hrizi2018one,amstutz2005crack,abda2009topological,ferchichi2013detection,hrizi2018new,burger2004incorporating,fulmanski2008level,jleli2015topological}
and so on.

In this paper, we extend this approach for the reconstruction of
obstacle.

\begin{remark}
In the particular case when the  exact obstacle $\O$ is known, the
best value $\gamma^*$ of the parameter $\gamma$ can be determined as
the minimum of the following error functional,
\begin{align}\label{err}
E(\gamma)=\Big[meas(\O\cup \O_{\gamma})-meas(\O\cap
\O_{\gamma})\Big]/meas(\O),\ \ \forall \gamma\in(0,1),
\end{align}
where $meas(H)$ is the Lebesgue measure of the set
$H\subset\mathbb{R}^2.$
\end{remark}

In all numerical tests, we use synthetic data, i.e., the measurement
$\psi^d$ is generated by numerically solving the problem (\ref{11}).
The square domain $\Omega=(0,1)\times(0,1)$ is used as a mould
filled with a viscous and incompressible fluid. For the boundary
$\Sigma$ and $\Gamma,$ see the sketch in Figure \ref{Figure1}.

The approximated solutions are computed using a uniform mesh with
$h=1/100.$ The numerical procedure is implemented using the free
software $FreeFem++.$

Next, we present some reconstruction results showing the efficiency
of the proposed one-shot algorithm.

\subsection{Reconstruction of some obstacles}
In this section, we study the reconstruction of obstacle having
circular or elliptical shapes with no noise added to the simulated
data.

\textbf{Example 1:} {\it Reconstruction of a circular-shaped
obstacle.} In this example, we test our procedure to detect an
obstacle having circular-shaped. More precisely, we want to
reconstruct an obstacle $\omega$ described by a disc centered at
$z=(0.5,0.5)$ with radius $r=0.05.$ The obtained reconstruction
results are illustrated in Figure \ref{Figure2}.

As one can observe in Figure \ref{Figure2}, the unknown obstacle
(see Figure \ref{Figure2}(b) circle black line ) is located in the
region where the topological gradient $\mathcal{G}$ is the most
negative (see Figure \ref{Figure2}(a) red zone) and it is
approximated by a level-set curve of the topological gradient (see
Figure \ref{Figure2}(b) red lines). The result is efficient and the
reconstruction of circular shape is very close to the actual
obstacle. We also observe the most negative values of the
topological gradient which are located near the actual boundary
$\partial\omega.$

To reconstruct the exact shape of the actual obstacle in Figure
\ref{Figure2}(b) (circle centered at $(0.5,0.5)$ with radius
$r=0.05$ (see Figure \ref{Figure2}(b) black line)), we minimize the
function $E$ and we take $\gamma^*$ the minimum of $E.$ To compute
numerically an approximation of the minimum of the function $E,$ we
divide the interval $(0, 1)$ into $\ell$ equal subintervals (i.e.,
of size $1/\ell$). We denote by $\gamma_i=i/\ell,\ 1\leq i\leq\ell$
the ($\ell+1$) endpoints of these intervals and we take
$\gamma^*=\operatorname*{arg\,
min}_{\gamma\in\{\gamma_1,...,\gamma_\ell\}}E(\gamma)$. We represent
the results in Figure \ref{Figure3} where the exact boundary
$\partial\omega$ represented in black and the obtained shape in red
(see Figure \ref{Figure3}(b)).

\textbf{Example 2:}  {\it Reconstruction of ellipse-shaped
obstacle.} In this example, we reconstruct an obstacle described by
an ellipse centered at $(0.5,0.5).$ We present the reconstruction
results in Figure \ref{Figure4} where we see that the boundary of
obstacle (see Figure \ref{Figure4}(a)) is again detected and located
in the zone where the topological gradient is negative (see Figure
\ref{Figure4}(a) red lines) and it is approximated by the set
$\omega_{\gamma^*}$ where $\gamma^*=0.15.$ Through this test, we
show that the proposed method is able to reconstruct approximately
the shape and location of obstacle described by an ellipse.

In conclusion of these simulations, this approach permits to give us
acceptable knowledge of the location and shape of obstacle having
circular or elliptical-shaped. The computation of the topological
gradient depends on the size of the obstacle. This remark is
illustrated by the following experiments.

\subsection{Influence of the size of the obstacle}
We now want to study how the size of an obstacle modifies the
quality of the detection given by our algorithm. In order to do
that, we test how is the detection of a single circle while we
increase the radius. For this test, we consider the circle centered
at $(0.5\,0.5)$ with radius $r\in\{0.03,0.06,0.12,0.18\}.$ The
obtained detection results are shown in Figure \ref{Figure5}.

From these results, we can notice that, when the obstacle is
relatively small, the reconstruction is quite efficient (see Figure
\ref{Figure5}(a)-(b)), but the quality is decreasing when the
obstacle becomes ``too big" (see Figure \ref{Figure5}(b)-(d)).

Next, we investigate the robustness of the method with respect to
noisy measurement.
\subsection{Reconstruction results with noisy data}
Now we are interested in investigating the robustness of the
reconstruction method when the measurement $\psi^d$ is corrupted
with Gaussian random noise. More precisely, the measurement $\psi^d$
is replaced by
$$
\psi^d_\rho=\psi^d+\delta\psi^d,
$$
where $\delta\psi^d$ is a Gaussian random noise with mean zero and
standard derivation $\delta\|\psi^d\|_{\infty},$ where $\delta$ is a
parameter.

For this test, we reconstruct an ellipse centered at $(0.5,0.5).$
The obtained results are illustrated in Figure \ref{Figure6}. From
the reconstruction results in Figure \ref{Figure6}, we can notice
that if the noise level no more than $20\%,$ that our algorithm is
able to detect the location and the shape of obstacle, whereas for a
noise level larger than $30\%$ the reconstruction becomes wrong.

\section{Concluding remarks}\label{conclusion}

The presented paper concerns the reconstruction of obstacle immersed
in a fluid governed by the Stokes-Brinkmann system in a
two-dimensional bounded domain $\Omega$ from internal data. In
particular, a non-iterative reconstruction method for solving the
above inverse problem has been proposed. The general idea consists
of rewriting the inverse problem as a topology optimization problem,
where a least square functional measuring the misfit between the
internal data measurements and the solution obtained from the model
is expanded (Stokes-Brinkmann system). The existence and the
stability of the optimization problem are proved. We have computed
the asymptotic expansion of the cost function using the penalization
technique without using the truncation method. The efficiency and
accuracy of the reconstruction algorithm are illustrated by some
numerical results. The presented method is general and can be
adapted for various inverse problems.

In this paper, focused on the topological sensitivity analysis and a
non-iterative reconstruction method, several mathematical issues of
high interest could not be discussed. The identifiability problem
for Stokes-Brinkmann problem is still an open one, will be the
subject of a forthcoming work.

Reconstructing obstacle from partial interior observation of the
velocity is also an interesting problem to tackle, for several
causes may lead to such a situation, especially when zones of the
fluid flow domain are not accessible to measurements.

\section*{Acknowledgements}
The authors would like to thank Professors Maria-Luisa Rap\'{u}n for
many helpful suggestions they made and for the careful reading of
the manuscript.


\bibliographystyle{abbrv} 
\bibliography{contact}
\newpage

\begin{figure}[!h]
\centering
    \includegraphics[width=90mm]{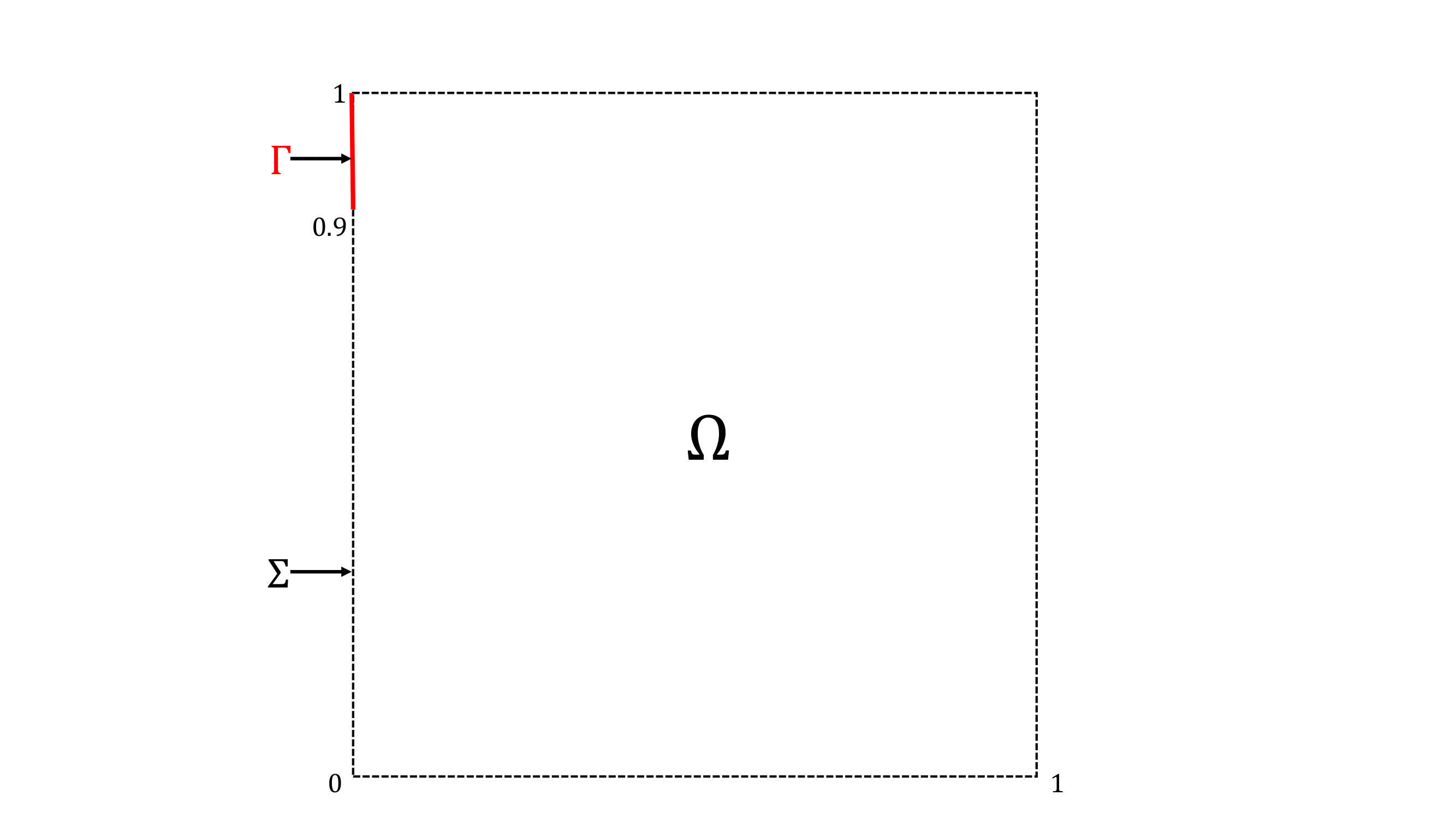}
    \caption{Domain $\Omega$ with boundary $\partial\Omega=\Sigma\cup\Gamma$}\label{Figure1}
    \end{figure}

\begin{figure} [!h]
     \centering
    \begin{tabular}{cc}
    \includegraphics[width=60mm]{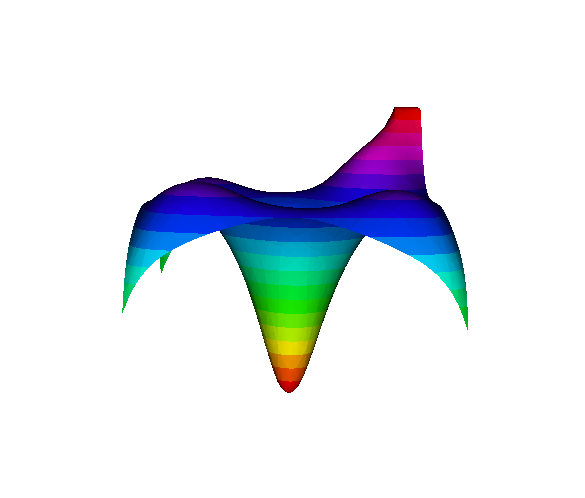}& \includegraphics[width=60mm]{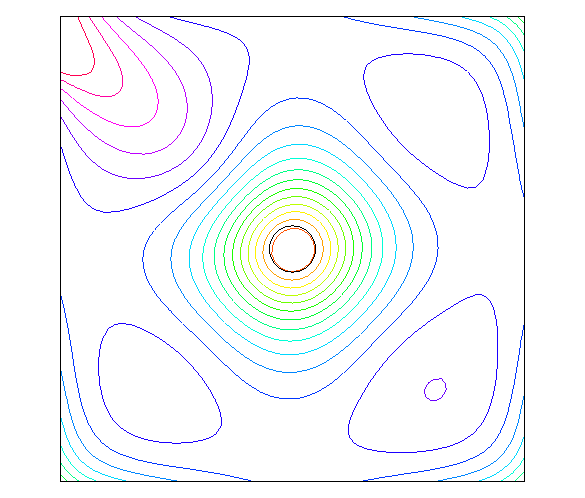}\\
(a) Negative zone (zed zone) of $\mathcal{G}$& (b) Iso-values of
$\mathcal{G}$
    \end{tabular}
    \caption{\label{Figure2} Topological gradient $\mathcal{G}$ in the presence of a circle shape}
    \end{figure}

\begin{figure} [!h]
     \centering
    \begin{tabular}{cc}
    \includegraphics[width=60mm]{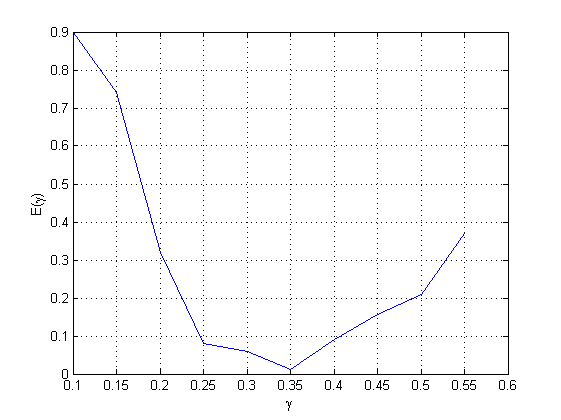}& \includegraphics[width=60mm]{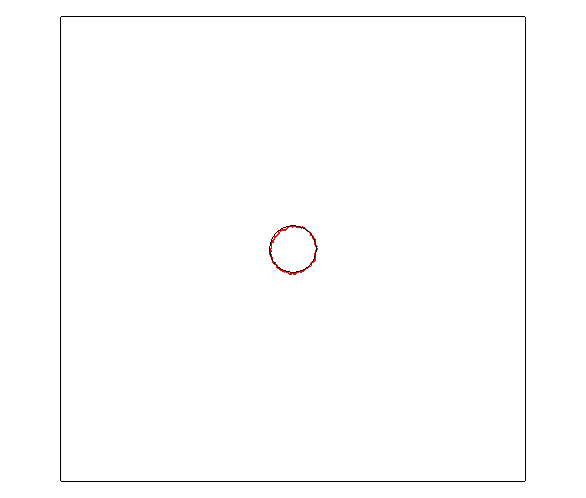}\\
(a)Variation of $E$ with respect to $\gamma$ & (b) Reconstruction
with $\gamma^*=0.35$
    \end{tabular}
    \caption{\label{Figure3} Reconstruction of obstacle having a circular-shaped}
    \end{figure}

\begin{figure} [!h]
     \centering
    \begin{tabular}{cc}
    \includegraphics[width=60mm]{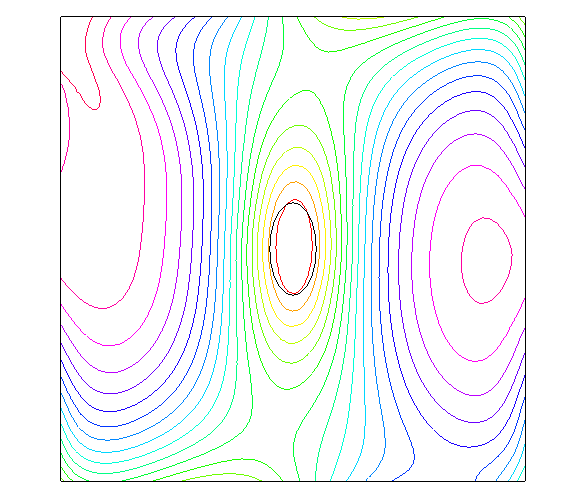}& \includegraphics[width=60mm]{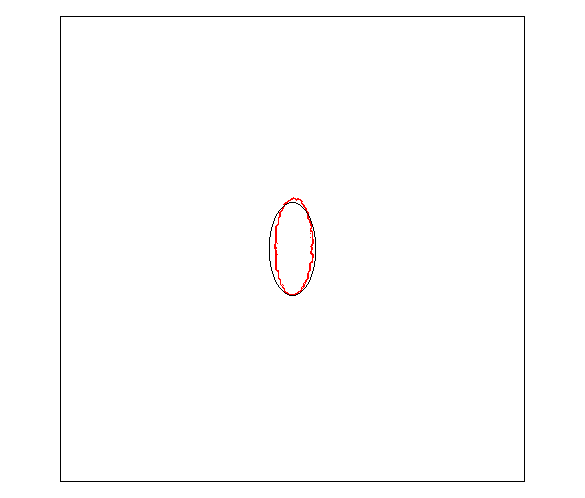}\\
(a)Iso-values of $\mathcal{G}$ & (b) Reconstruction with
$\gamma^*=0.62$
    \end{tabular}
    \caption{\label{Figure4} Reconstruction of obstacle having an ellipse-shaped}
    \end{figure}

\begin{figure}[!h]
     \centering
    \begin{tabular}{cc}
    \includegraphics[width=60mm]{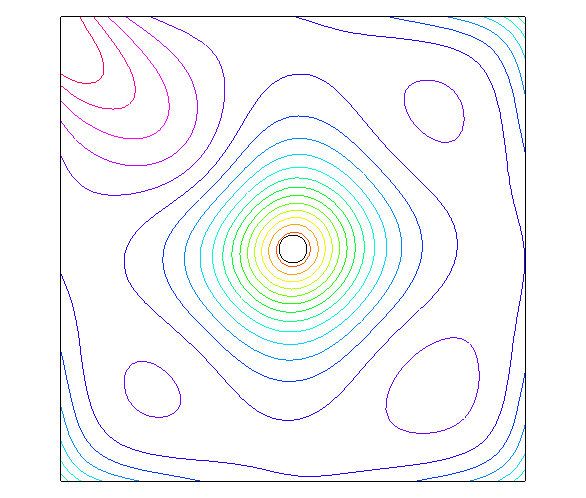}& \includegraphics[width=60mm]{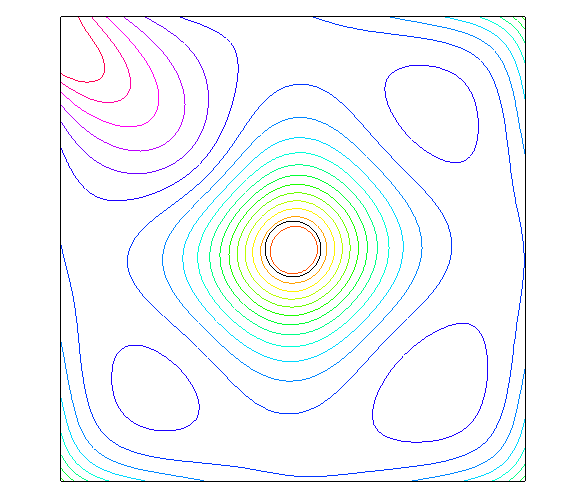}\\
    (a)Iso-values of $\mathcal{G}$ with $r=0.03$& (b)Iso-values of $\mathcal{G}$ with $r=0.06$\\
     \includegraphics[width=60mm]{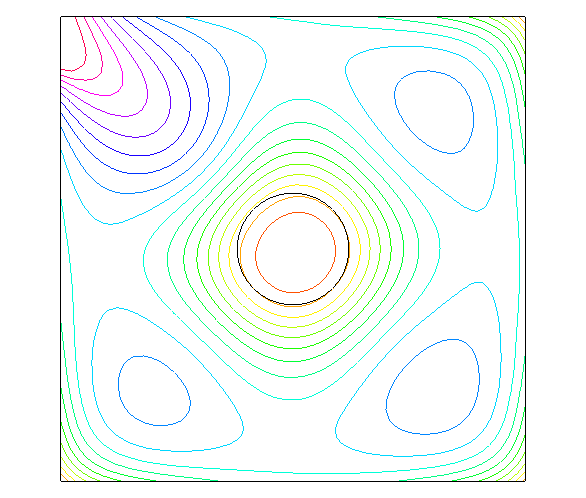}& \includegraphics[width=60mm]{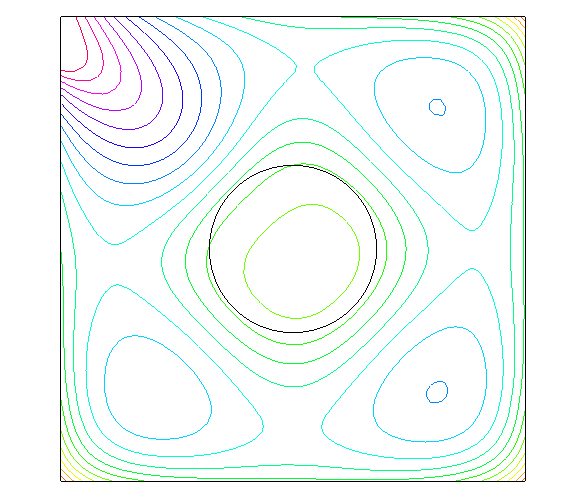}\\
    (c)Iso-values of $\mathcal{G}$ with $r=0.12$  & (d)Iso-values of $\mathcal{G}$ with $r=0.18$\\
    \end{tabular}
    \caption{\label{Figure5} Iso-values of the topological gradient when we increase the size of the object}
    \end{figure}

\begin{figure}[!h]
     \centering
    \begin{tabular}{cc}
    \includegraphics[width=60mm]{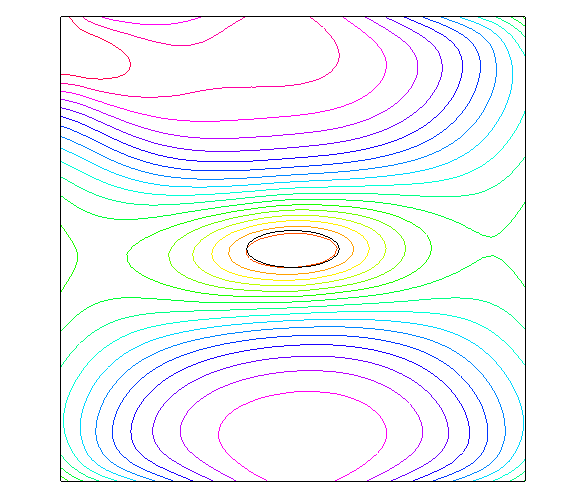}& \includegraphics[width=60mm]{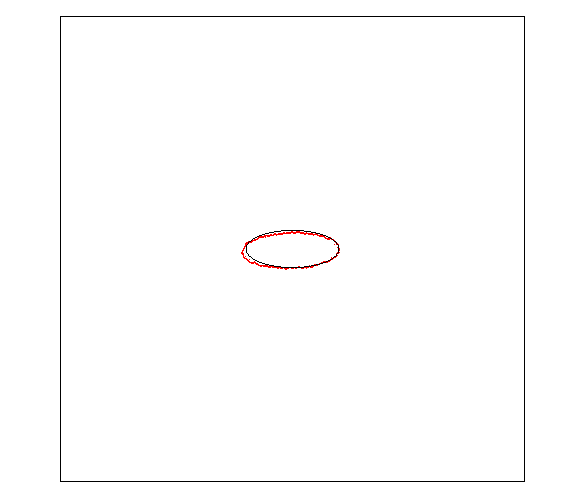}\\
    (a)Iso-values of $\mathcal{G}$ with $0\%$ noise & (b)Reconstruction with $\gamma^*=0.63$\\
     \includegraphics[width=60mm]{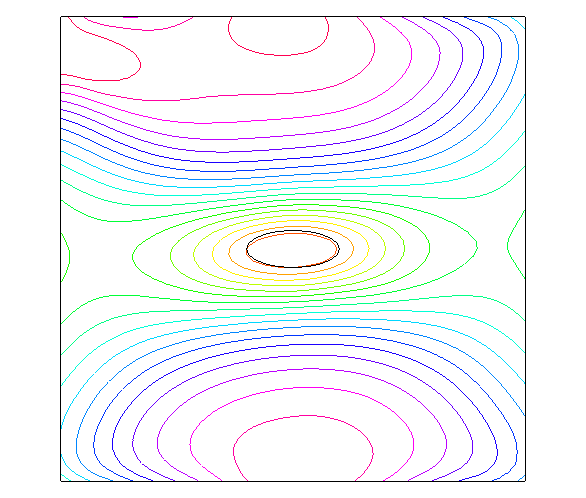}& \includegraphics[width=60mm]{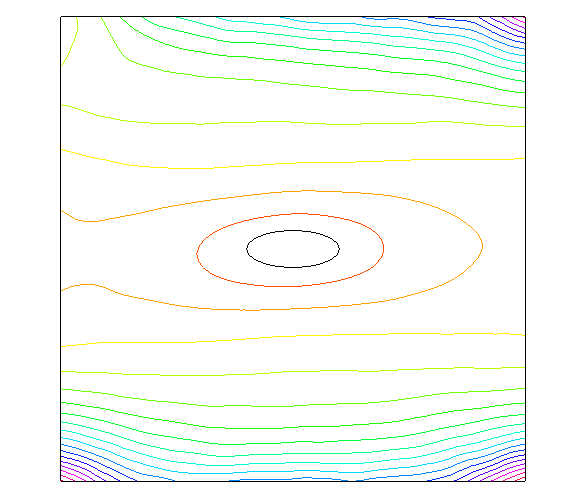}\\
    (c)Iso-values of $\mathcal{G}$ with $5\%$ noise & (d)Iso-values of $\mathcal{G}$ with $10\%$ noise\\
     \includegraphics[width=60mm]{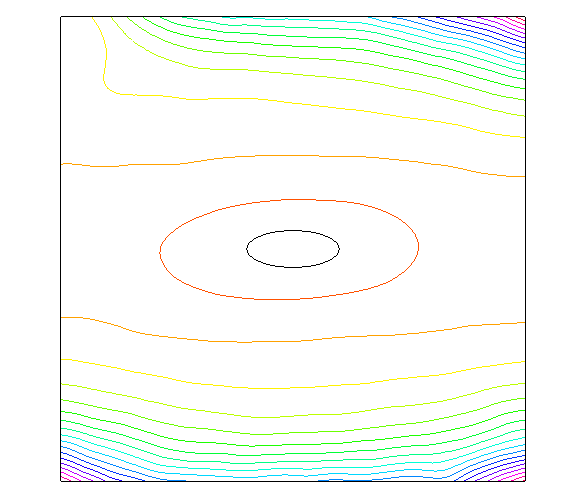}& \includegraphics[width=60mm]{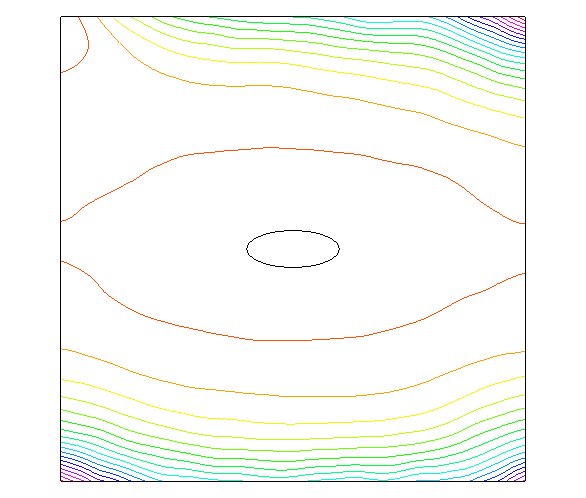}\\
    (e)Iso-values of $\mathcal{G}$ with $20\%$ noise & (f)Iso-values of $\mathcal{G}$ with $30\%$ noise
    \end{tabular}
    \caption{\label{Figure6} Reconstruction with noise data}
    \end{figure}

\end{document}